\providecommand{\U}[1]{\protect\rule{.1in}{.1in}}
\newtheorem{theorem}{Theorem}
\newtheorem{conjecture}[theorem]{Conjecture}
\newtheorem{definition}[theorem]{Definition}
\newtheorem{example}[theorem]{Example}
\newtheorem{idea}[theorem]{Idea}
\newtheorem{lemma}[theorem]{Lemma}
\newtheorem{notation}[theorem]{Notation}
\newtheorem{proposition}[theorem]{Proposition}
\newtheorem{remark}[theorem]{Remark}
\numberwithin{theorem}{section}
\numberwithin{equation}{section}
\begin{document}
\title[The heat flow conjecture for polynomials]{The heat flow conjecture for polynomials and random matrices}
\author{Brian C. Hall}
\address{Department of Mathematics, University of Notre Dame, Notre Dame, IN 46556, USA}
\email{bhall@nd.edu}
\author{Ching-Wei Ho}
\address{Institute of Mathematics, Academia Sinica, Taipei 10617, Taiwan}
\email{chwho@gate.sinica.edu.tw}
\thanks{Hall's research supported in part by a grant from the Simons Foundation}

\begin{abstract}
We study the evolution of the roots of a polynomial of degree $N$, when the
polynomial itself is evolving according to the heat flow. We propose a general
conjecture for the large-$N$ limit of this evolution. Specifically, we propose
(1) that the log potential of the limiting root distribution should evolve
according to a certain first-order, nonlinear PDE, and (2) that the limiting
root distribution at a general time should be the push-forward of the initial
distribution under a certain explicit transport map. These results should hold
for sufficiently small times, that is, until singularities begin to form.

We offer three lines of reasoning in support of our conjecture. First, from a
random matrix perspective, the conjecture is supported by a deformation
theorem for the second moment of the characteristic polynomial of certain
random matrix models. Second, from a dynamical systems perspective, the
conjecture is supported by the computation of the second derivative of the
roots with respect to time, which is formally small before singularities form.
Third, from a PDE\ perspective, the conjecture is supported by the exact
PDE\ satisfied by the log potential of the empirical root distribution of the
polynomial, which formally converges to the desired PDE\ as $N\rightarrow
\infty.$ We also present a \textquotedblleft multiplicative\textquotedblright%
\ version of the the conjecture, supported by similar arguments.

Finally, we verify rigorously that the conjectures hold at the level of the
holomorphic moments.

\end{abstract}
\maketitle
\tableofcontents

\section{Introduction}

\subsection{Heat flow on polynomials\label{introFlow.sec}}

Consider the standard heat equation for a function $u(x,t)$ on the real line,%
\[
\frac{\partial u}{\partial t}=\frac{1}{2}\frac{\partial^{2}u}{\partial x^{2}%
},
\]
with a polynomial initial condition,%
\[
u(x,0)=p(x).
\]
It is easily seen that the solution (computed, say, as the convolution of $p$
with a Gaussian of variance $t$) can be expressed as a terminating power
series,%
\[
u(x,t)=\sum_{k=0}^{\infty}\frac{1}{k!}\left(  \frac{t}{2}\right)  ^{k}\left(
\frac{d^{2}}{dx^{2}}\right)  ^{k}p(x),\quad x\in\mathbb{R},~t>0.
\]

Note that $u(x,t)$ is a polynomial in $x$ for each $t,$ with degree equal to
the degree of $p.$ The roots of this polynomial, however, are not necessarily
real, even if the roots of $p$ are real. It is therefore natural to
holomorphically extend both the initial condition and the solution in the
space variable from $\mathbb{R}$ to $\mathbb{C}$ and consider the function%
\begin{equation}
u(z,t)=\sum_{k=0}^{\infty}\frac{1}{k!}\left(  \frac{t}{2}\right)  ^{k}\left(
\frac{d^{2}}{dz^{2}}\right)  ^{k}p(z),\quad z\in\mathbb{C},~t>0,
\label{complexSeries}%
\end{equation}
where $d/dz$ is the complex derivative of a holomorphic function.

Once we have this (terminating) power series representation of the solution,
we note that the formula on the right-hand side of (\ref{complexSeries}) makes
sense with $t$ replaced by an arbitrary complex number. Furthermore, in the
analysis of the large-$N$ limit, it is convenient to scale the heat equation
in a way that depends on the degree $N$ of the initial condition. We therefore
define the time-$\tau$ \textbf{heat flow} operator $\exp\left(  \frac{\tau
}{2N}\frac{d^{2}}{dz^{2}}\right)  $ on polynomials $p^{N}$ of degree $N$ by
the terminating power series
\begin{equation}
\exp\left(  \frac{\tau}{2N}\frac{d^{2}}{dz^{2}}\right)  p^{N}(z)=\sum
_{k=0}^{\infty}\frac{1}{k!}\left(  \frac{\tau}{2N}\right)  ^{k}\left(
\frac{d^{2}}{dz^{2}}\right)  ^{k}p^{N}(z),\quad\tau\in\mathbb{C}.
\label{heatDef}%
\end{equation}
We note the operator $d^{2}/dz^{2}$ appearing in (\ref{heatDef}) is not the
Laplacian on the plane (which would be, up to a constant, $\partial
^{2}/\partial z\partial\bar{z}$) but rather the holomorphic extension of the
Laplacian $d^{2}/dx^{2}$ on the line. Replacing $d^{2}/dz^{2}$ in
(\ref{heatDef}) by $\partial^{2}/\partial z\partial\bar{z}$ would not give an
interesting result, since the holomorphic function $p^{N}(z)$ is annihilated
by $\partial/\partial\bar{z}$ and we would simply get back $p^{N}(z).$

If, for example, we take $\tau=-t,$ with $t>0,$ we obtain the backward heat
flow (with a polynomial initial condition). This case is of particular
interest because it preserves the class of polynomials with all real roots, by
the P\'{o}lya--Benz theorem \cite[Theorem 1.2]{Aleman}. The asymptotic
behavior of real roots under the backward heat flow has a natural free
probability interpretation: If the root distribution of a sequence $p^{N}$ of
polynomials approaches a compactly supported probability measure $\mu,$ the
root distribution of $\exp\left(  -\frac{t}{2N}\frac{d^{2}}{dz^{2}}\right)
p^{N}$ approaches the free additive convolution $\mu\boxplus\mathsf{sc}_{t},$
where $\mathsf{sc}_{t}$ is the semicircular measure with variance $t.$ (Free
convolution with the semicircular distribution was investigated by Biane
\cite{BianeHeat} as a free version of the heat equation.) See results of
Kabluchko \cite{Kabluchko} and Voit and Woerner \cite{VW}.  

In this paper, we will investigate the behavior of \textit{complex} roots of
polynomials under the heat flow. One may think, for example, of starting with
a polynomial with real roots and applying the \textit{forward} heat flow
($\tau=t>0$), in which case, the roots will not remain real. (Terry Tao's blog
post \cite{Tao1}, for example, shows that the roots cannot remain real for all
time, except in the case of polynomial of degree 1.) The behavior of complex
roots under the heat flow is much more complicated than for real roots under
the backward heat flow. We are, nevertheless, able to formulate some precise
conjectures about the evolution of complex roots and we find, again, a
connection to random matrix theory and free probability. In Section
\ref{multmult.sec}, we will also study a \textquotedblleft
multiplicative\textquotedblright\ version of the heat flow with similar
conjectures and connections to random matrix theory.

We could conceivably restrict attention to the forward heat flow, taking
$\tau$ to be real and positive. But once the roots are allowed to be complex,
there is no meaningful differences in behavior among the forward heat flow,
the backward heat flow, and the heat flow with a complex time parameter.
Indeed, it makes many of the computations simpler to work with a general
complex time parameter.

The study of roots of polynomials under the heat flow is closely connected to
the study of roots of polynomials under repeated differentiation, as in (to
mention just a very few of the most directly relevant examples) work of
O'Rourke and Steinerberger \cite{OS}, Feng and Yao \cite{FY}, Hoskins and
Kabluchko \cite{HK}, and our own paper with Jalowy and Kabluchko \cite{HHJK3}.

\subsection{Motivation from random matrix theory\label{motivation.sec}}

In this subsection, we motivate the study of roots of polynomials under the
heat flow by connecting it to random matrix theory. In recent years,
connections have been developed between constructions in free probability or
random matrix theory (on the one hand) and operations on polynomials (on the
other hand). The first prominent example of such a connection is the work of
Marcus, Spielman, and Srivastava \cite{MSS} on finite free probability. In the
additive case, the authors of \cite{MSS} take an old construction of Walsh
\cite{Walsh} taking two polynomials $p$ and $q$ of degree $d$ and associating
to it a new polynomial $p\boxplus_{d}q.$ They give it a new interpretation as
the polynomial counterpart of the free additive convolution in free
probability theory. Indeed, results of Arizmendi and Perales \cite[Corollary
5.5]{AP} (see also \cite[Section 4]{Marcus}) show that as $d\rightarrow
\infty,$ the finite free convolution converges in an appropriate sense to the
ordinary free additive convolution.

Another connection between free probability and polynomials is the
relationship, discussed in Section \ref{introFlow.sec}, between the backward
heat flow on real-rooted polynomials and the free convolution with the
semicircular distribution.

Still another such connection comes from repeated differentiation of
polynomials with real roots, in which we take $\lfloor Nt\rfloor$ derivatives
of a polynomial of degree $N,$ with $0\leq t<1.$ Work of Hoskins and Kabluchko
\cite{HK} and Arizmendi, Garza-Vargas, and Perales \cite{AGP} establish the
following result: If a sequence of real-rooted polynomials $p^{N}$ of degree
$N$ have root distribution converging to $\mu,$ then the $\lfloor Nt\rfloor
$-th derivative of $p^{N}$ has root distribution converging to a rescaling of
$\mu_{t}=\mu^{1/(1-t)},$ where $\mu^{\alpha},$ $\alpha\geq1,$ denotes the
fractional free convolution introduced by Bercovici--Voiculescu \cite{BV} and
Nica--Speicher \cite{NicaSpeicher}.

Meanwhile, Campbell, O'Rourke, and Renfrew \cite{COR} have introduced a
version of the preceding result for random polynomials with independent
coefficients; such polynomials have \textit{complex} roots with an
asymptotically radial distribution. Their result also involves a version of
fractional convolution, but for $R$-diagonal operators.

In this paper, we will consider a connection between the heat flow on
polynomials with complex roots and something called the \textquotedblleft
model deformation phenomenon\textquotedblright\ in random matrix theory and
free probability. We now introduce the simplest example of this phenomenon.
Consider a family of \textquotedblleft elliptic\textquotedblright\ random
matrix models, having the form%
\begin{equation}
Z_{t}^{N}=\sqrt{1-\frac{t}{2}}~X^{N}+i\sqrt{\frac{t}{2}}~Y^{N},\quad0\leq
t\leq2,\label{ZtIntro}%
\end{equation}
where $X^{N}$ and $Y^{N}$ are independent GUE matrices. Then $Z_{t}$ is a GUE
matrix at $t=0,$ a Ginibre matrix at $t=1,$ and a GUE matrix multiplied by $i$
at $t=2.$ For $0<t<2,$ the limiting eigenvalue distribution of $Z_{t}^{N}$ is
uniform on an ellipse centered at the origin with semi-axes $2-t$ and $t$. At
$t=0$ or $t=2,$ the limiting eigenvalue distribution collapses onto the real
or imaginary axis, respectively. If $\sigma_{t}$ denote the limiting
eigenvalue distribution of $Z_{t},$ it is easily seen that $\sigma_{t}$ is the
push-forward of $\sigma_{1}$ (uniform on the unit disk) under the map
\[
T_{t}(z)=z-(t-1)\bar{z}.
\]

We now generalize the elliptic model by adding an independent Hermitian
matrix:%
\begin{equation}
A_{t}^{N}=X_{0}^{N}+Z_{t}^{N},\label{atmodel}%
\end{equation}
where $X_{0}^{N}$ is Hermitian and independent of $Z_{t}^{N}.$ (A more general
version of this model is considered in Section \ref{additiveConjecture.sec}.)
The distribution of $X_{0}^{N}$ can be quite general; we assume only the
eigenvalue distribution of $X_{0}^{N}$ converges almost surely to a compactly
supported probability measure on $\mathbb{R}.$ If $\mu_{t}$ denotes the
limiting eigenvalue distribution of $A_{t}^{N},$ results of \cite{HHmult}
(adapted from the \textquotedblleft multiplicative\textquotedblright\ case to
the simpler \textquotedblleft additive\textquotedblright\ case) give the
following results.

\begin{itemize}
\item For all $t$ with $-1<t<1,$ the log potential $S(t,z)$ of $\mu_{t}$
satisfies the PDE
\begin{equation}
\frac{\partial S}{\partial t}=\frac{1}{4}\left(  \left(  \frac{\partial
S}{\partial x}\right)  ^{2}-\left(  \frac{\partial S}{\partial y}\right)
^{2}\right)  , \label{introPDES}%
\end{equation}
except possibly on the boundary of the support of $\mu_{t}.$

\item For all $t$ with $-1<t<1,$ the measure $\mu_{t}$ is the push-forward of
$\mu_{0}$ under the map $T_{t}$ given by%
\[
T_{t}(z)=z-tm_{0}(z),
\]
where $m_{0}$ is the Cauchy transform of the measure $\mu_{0}$:%
\[
m_{0}(z)=\int_{\mathbb{C}}\frac{1}{z-w}~d\mu_{0}(w).
\]

\end{itemize}

The two points above are related as follows:\ The curves $t\mapsto T_{t}(z)$
are precisely the characteristic curves of the PDE\ (\ref{introPDES}). We
refer to the second result as \textquotedblleft model
deformation,\textquotedblright\ meaning that as we deform the random matrix
model by changing $t,$ the limiting eigenvalue distribution deforms by
push-forward under a certain canonical map. As noted above, the paper
\cite{HHmult} gives a multiplicative version of the preceding results.
Furthermore, the paper \cite{Zhong2} extends the second point to the case in
which $X_{0}^{N}$ is no longer required to be Hermitian. Finally, a version of
model deformation is obtained in \cite[Section 6]{HHJK3} with connection to
repeated applications of differential operators to random polynomials.

\begin{idea}
\label{heatDeform.idea}The heat flow is the polynomial counterpart of the
model deformation phenomenon.
\end{idea}

This idea is explained in detail in Section \ref{RMT.sec}, but briefly it
means that applying the heat flow for time $(t-t_{0})$ to the characteristic
polynomial of the random matrix $A_{t_{0}}$ in (\ref{atmodel}) should give a
new polynomial whose roots resemble the eigenvalues of $A_{t}.$ Thus, roughly
speaking, applying the heat operator has the same effect as changing the value
of $t$ in the model. This idea gives one reason to study the heat flow on
polynomials---and it also gives considerable insight into how the roots of
polynomials should evolve under the heat flow.

The basis for Idea \ref{heatDeform.idea} is a \textquotedblleft deformation
theorem\textquotedblright\ for the second moments of the characteristic
polynomials of the matrices $A_{t}^{N}.$ It states that varying $t$ in the
random matrix model has the same effect---at the level of the second
moments---as applying the heat flow. See Theorem \ref{deformationAdditive.thm}
in Section \ref{secMom.sec}.

\subsection{The main conjectures}

\begin{definition}
If $p^{N}$ is a polynomial of degree $N,$ we define the \textbf{empirical root
distribution} of $p^{N}$ as the probability measure on $\mathbb{C}$ given by%
\[
\frac{1}{N}\sum_{j=1}^{N}\delta_{z_{j}},
\]
where $z_{1},\ldots,z_{N}$ are the roots of $p^{N}$, listed with their
multiplicity. If $\{p^{N}\}_{N=1}^{\infty}$ is a sequence of polynomials of
degree $N,$ we say that a measure $\mu$ is the \textbf{limiting root
distribution} of the sequence if the empirical root distribution of $p^{N}$
converges weakly to $\mu$ as $N\rightarrow\infty.$
\end{definition}

\begin{definition}
\label{PotentialCauchy.def}If $\mu$ is a compactly supported probability
measure on $\mathbb{C},$ we define the \textbf{log potential}
$S:\mathbb{C\rightarrow R}$ and the \textbf{Cauchy transform} $m:\mathbb{C}%
\rightarrow\mathbb{C}$ of $\mu$ by%
\begin{align*}
S(z)  &  =\int_{\mathbb{C}}\log(\left\vert z-w\right\vert ^{2})~d\mu(w);\\
m(z)  &  =\int_{\mathbb{C}}\frac{1}{z-w}~d\mu(w).
\end{align*}
These functions are defined Lebesgue almost everywhere for any such $\mu$ and
are defined everywhere if $\mu$ has a bounded density with respect to the
Lebesgue measure on the plane.
\end{definition}

We begin by considering the ordinary heat flow in (\ref{heatDef}). We refer to
this flow as the \textquotedblleft additive\textquotedblright\ (as opposed to
\textquotedblleft multiplicative\textquotedblright) case, because it is
connected (Section \ref{RMT.sec}) to Gaussian random matrix models, which can
be constructed as sums (as opposed to products) of large numbers of
independent random matrices.

We now present a general conjecture for the behavior of high-degree
polynomials under the heat flow.

\begin{conjecture}
\label{introConj.conj}Suppose $p_{0}^{N}$ is a sequence of polynomials of
degree $N$ having a compactly supported limiting root distribution $\mu_{0}$
whose Cauchy transform is globally Lipschitz. Then for all $\tau\in
\mathbb{C},$ the empirical root measure of
\begin{equation}
p_{\tau}^{N}(z):=\exp\left(  \frac{\tau}{2N}\frac{d^{2}}{dz^{2}}\right)
p^{N}(z) \label{pTauDef}%
\end{equation}
also converges weakly to a compactly supported probability measure $\mu_{\tau
}.$ Furthermore, there exists a constant $C>0$ such that the following results hold.

\begin{enumerate}
\item \label{pde.point}The log potential $S(z,\tau)$ of $\mu_{\tau}$ is a
$C^{1,1}$ solution of the PDE%
\begin{equation}
\frac{\partial S}{\partial\tau}=\frac{1}{2}\left(  \frac{\partial S}{\partial
z}\right)  ^{2} \label{addPDEintro}%
\end{equation}
in the set $\{z\in\mathbb{C},~\left\vert \tau\right\vert <C\}.$ Here
$\partial/\partial\tau$ and $\partial/\partial z$ denote the Cauchy--Riemann
operators, given by%
\[
\frac{\partial}{\partial\tau}=\frac{1}{2}\left(  \frac{\partial}{\partial
\tau_{1}}-i\frac{\partial}{\partial\tau_{2}}\right)  ;\quad\frac{\partial
}{\partial z}=\frac{1}{2}\left(  \frac{\partial}{\partial x}-i\frac{\partial
}{\partial y}\right)  ,
\]
where $\tau=\tau_{1}+i\tau_{2}$ and $z=x+iy.$

\item \label{push.point}Let $m_{0}$ be the Cauchy transform of $\mu_{0}$ and
define a \textbf{transport map} $T_{\tau}:\mathbb{C}\rightarrow\mathbb{C}$ by
\begin{equation}
T_{\tau}(z)=z-\tau m_{0}(z). \label{TofTau}%
\end{equation}
Then for all $\tau$ with $\left\vert \tau\right\vert <C$, the measure
$\mu_{\tau}$ is the push-forward of $\mu_{0}$ by $T_{\tau}$:%
\begin{equation}
\mu_{\tau}=(T_{\tau})_{\ast}(\mu_{0}), \label{PushIntro}%
\end{equation}
where $(f)_{\ast}(\mu)$ denotes the push-forward of a measure $\mu$ by a map
$f.$
\end{enumerate}
\end{conjecture}

In Point \ref{pde.point}, the condition $C^{1,1}$ means, more precisely, that
the partial derivatives of $S$ with respect to the real and imaginary parts of
$z$ are globally Lipschitz.

Suppose the roots of $p_{\tau}^{N}$ are distinct for $\tau$ in some domain
$U\subset\mathbb{C},$ so that we can parameterize the roots holomorphically in
$\tau\in U$ as $\{z_{j}(\tau)\}_{j=1}^{N}.$ Then in light of (\ref{TofTau})
and (\ref{PushIntro}), it is natural to expect that the roots will evolve as%
\begin{equation}
z_{j}(\tau)\approx z_{j}(0)-\tau m_{s,\tau_{0}}(z_{j}(0)).
\label{approxMotion}%
\end{equation}
We may interpret (\ref{approxMotion}) to mean that, with high probability when $N$ is large, 
most of trajectories $z_j(\tau)$ will be uniformly close to the right-hand side of (\ref{approxMotion}), 
for sufficiently small $\tau$. 
Although such a result does not follow from (\ref{PushIntro}), the arguments
in Section \ref{dynamical.sec} will support this behavior. A rigorous result in this direction was established for the zeros of the heat-evolved Gaussian analytic function in Theorem 1.3 of our paper with Jalowy and Kabluchko \cite{HHJK1}.

The simplest case of Conjecture \ref{introConj.conj} is the one in which
$\mu_{0}$ is uniform on the unit disk, in which case $m_{0}(z)=\bar{z}$ for
all $z$ in the disk. Then $T_{\tau}(z)=z-\tau\bar{z}$ and for real $\tau$
between $-1$ and $1,$ the measure $\mu_{\tau}$ in (\ref{PushIntro}) is uniform
on an ellipse with semi-axes $1-\tau$ and $1+\tau.$ See Figure
\ref{uniformlattice.fig} and see also Slide 3 in the supplemental document for
an animation.%

\begin{figure}[ptb]%
\centering
\includegraphics[
height=1.9441in,
width=4.8282in
]%
{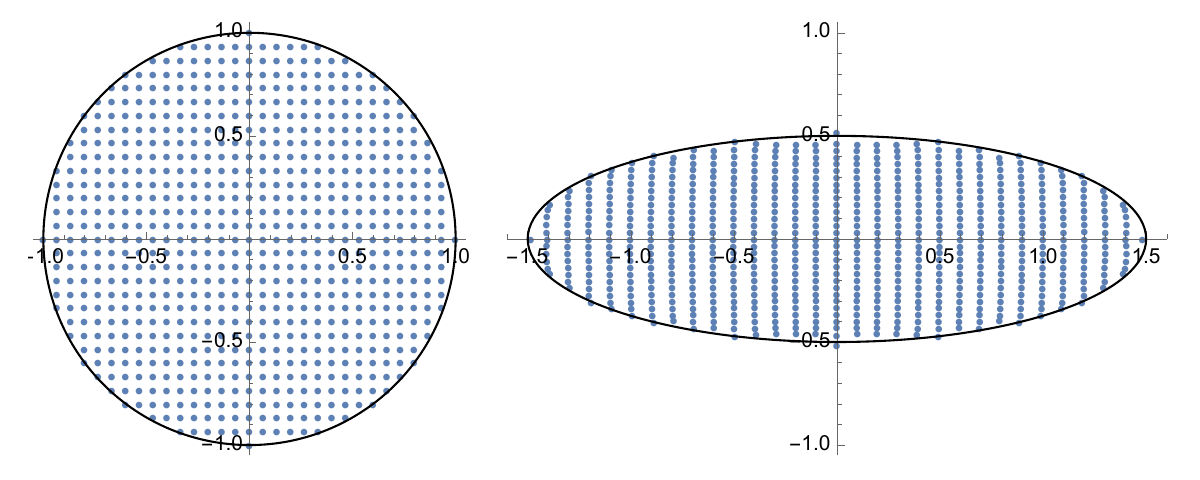}%
\caption{Heat evolution of a polynomial whose roots form a uniform lattice
inside the unit disk. Shown for $\tau=-1/2.$}%
\label{uniformlattice.fig}%
\end{figure}

\begin{remark}
If we take $\tau$ to equal a real number $t$ in (\ref{addPDEintro}), take the
real part of both sides, and then multiply by 2, we obtain the following
real-variables PDE:%
\[
\frac{\partial S}{\partial t}=\frac{1}{4}\left(  \left(  \frac{\partial
S}{\partial x}\right)  ^{2}-\left(  \frac{\partial S}{\partial y}\right)
^{2}\right)  ,\quad t\in\mathbb{R}.
\]
Here $\partial/\partial t$ is the ordinary partial derivative in the real
variable $t.$
\end{remark}

The assumption that $m_{0}(z)$ is globally Lipschitz guarantees that the map
$T_{\tau}$ is a homeomorphism of $\mathbb{C}$ to $\mathbb{C}$, when
$\left\vert \tau\right\vert $ is small enough; see Lemma \ref{lem.Ttau.prop}.
This assumption will be satisfied if $\mu_{0}$ is absolutely continuous with
respect to Lebesgue measure on the plane, with a sufficiently smooth density.
But if $\mu_{0}$ is supported on the real line, $m_{0}$ will not even be continuous.

As explained in Sections \ref{brief.sec} and \ref{details.sec}, if we have
enough regularity in the solution to (\ref{addPDEintro}), we can hope to
deduce Point \ref{push.point} of Conjecture \ref{pde.point} from Point
\ref{pde.point}. To do this, we will take the Laplacian of both sides of
(\ref{addPDEintro}) to obtain a continuity equation satisfied by the density
of the measure $\mu_{\tau},$ from which a push-forward theorem will hold,
under suitable technical assumptions. But in Section \ref{dynamical.sec}, we
will give a direct argument for Point \ref{push.point} of the conjecture,
which does not rely on Point \ref{pde.point}.

\begin{remark}
The polynomials in Conjecture \ref{introConj.conj} can be deterministic or
random. In the random case, one might imagine proving the result only
\textquotedblleft in probability.\textquotedblright\ That is, one might prove
that both the hypothesis (convergence of the empirical root measure of
$p_{0}^{N}$) and the conclusions (Points \ref{pde.point} and \ref{push.point})
hold in the sense of weak convergence in probability. 
\end{remark}

\textquotedblleft Weak convergence in probability\textquotedblright\ means
that with with high probability, the approximating measures are close to the
limit measure in the weak topology.

\begin{remark}
\label{hhjk.remark}In a separate work with Jalowy and Kabluchko \cite{HHJK2},
we prove an \textquotedblleft in probability\textquotedblright\ version of
Conjecture \ref{introConj.conj} for random polynomials with independent
coefficients. One needs mild regularity assumptions on the coefficients to
guarantee that the constant $C$ (denoted $t_{\mathrm{sing}}$ in \cite{HHJK2})
is positive. See Theorem 5.2 (PDE) and Theorem 3.3 (push-forward theorem) in
\cite{HHJK2}.

In another work with the same authors \cite{HHJK1}, we establish rigorous
results in \cite{HHJK1} for the zeros of the Gaussian analytic function under
the heat flow. See also \cite{HHJK3} for similar results on the zeros of
random polynomials under repeated differentiation.
\end{remark}

The work \cite{HHJK2} builds on work of Kabluchko and Zaporozhets \cite{KZ}.
The paper \cite{KZ} studies polynomials formed as sums of ordinary monomials
$z^{j}$ multiplied by random coefficients that are independent (but typically
not identically distributed). The paper \cite{HHJK2} applies the heat flow to
a Kabluchko--Zaporozhets polynomial---which has the effect of replacing the
monomial $z^{j}$ by the (properly scaled) $j$th Hermite polynomial. The heat
flow therefore leads to a major new class of random polynomials that can be
analyzed by methods similar to those in \cite{KZ}. This observation then opens
the door to the study of even more general random polynomials by similar
methods (work in progress by the present authors with Jalowy and Kabluchko).

We present three distinct lines of reasoning for Conjecture
\ref{introConj.conj}.

\begin{itemize}
\item An argument from the perspective of \textbf{random matrix theory}
(Section \ref{RMT.sec}). This line of reasoning is specific to the case in
which $p_{0}^{N}$ is the characteristic polynomial of a certain class of
random matrix models, and we would hope for an almost sure version of the conjecture.

\item An argument from the perspective of \textbf{dynamical systems} (Section
\ref{dynamical.sec}). We will argue that for large $N,$ the roots move in
approximately straight lines with constant velocity given by the negative of
initial Cauchy transform. This argument supports Point \ref{push.point} of the conjecture.

\item An argument from the perspective of \textbf{partial differential
equations} (Section \ref{pde.sec}). We will derive an exact PDE satisfied by
the log potential of the empirical root measure of a polynomial evolving by
the heat equation. This PDE\ formally converges to the PDE (\ref{addPDEintro})
in the limit $N\rightarrow\infty.$ We then show how the
PDE\ (\ref{addPDEintro}) implies a continuity equation for the density of the
measure. With enough regularity, we can then show that Point \ref{pde.point}
of Conjecture \ref{introConj.conj} implies Point \ref{push.point}.
\end{itemize}

We now present the \textquotedblleft multiplicative\textquotedblright\ version
of Conjecture \ref{introConj.conj}. The heat flow in (\ref{heatDefMult})
arises naturally from certain multiplicative random matrix models. See Theorem
\ref{deformationMult.thm} and Lemma \ref{secondIDmult.lem}. This flow is
almost the same as the one discussed by Terry Tao in \cite{Tao2}.

\begin{conjecture}
\label{introConjMult.conj}A \textquotedblleft multiplicative\textquotedblright%
\ version of Conjecture \ref{introConj.conj} also holds, with the heat flow in
(\ref{heatDef}) replaced by the operator%
\begin{equation}
\exp\left\{  -\frac{(\tau-\tau_{0})}{2N}\left(  z^{2}\frac{\partial^{2}%
}{\partial z^{2}}-(N-2)z\frac{\partial}{\partial z}-N\right)  \right\}  ,
\label{heatDefMult}%
\end{equation}
the PDE (\ref{addPDEintro}) replaced by
\[
\frac{\partial S}{\partial\tau}=-\frac{1}{2}\left(  z^{2}\left(
\frac{\partial S}{\partial z}\right)  ^{2}-z\frac{\partial S}{\partial
z}\right)
\]
and the transport map $T_{\tau}$ in (\ref{TofTau}) replaced by
\[
\tilde{T}_{\tau}(z)=z\exp\left\{  \tau zm_{0}(z)-\frac{1}{2}\right\}  ,
\]
where $m_{0}$ is the Cauchy transform of the limiting root distribution of the
initial polynomials.
\end{conjecture}

Since the operator in the exponent in (\ref{heatDefMult}) maps $z^{j}$ to a
multiple of $z^{j}$ for all $j,$ the exponentiated operator will also have
this property. Conjecture \ref{introConjMult.conj} will be supported by
appropriate versions of the three lines of reasoning described above
supporting Conjecture \ref{introConj.conj}.

\subsection{Rigorous results}

None of the perspectives discussed above leads to a rigorous argument for
Conjecture \ref{introConj.conj} or Conjecture \ref{introConjMult.conj}. We
nevertheless obtain some rigorous results, as follows. First, we establish in
Theorems \ref{deformationAdditive.thm} and \ref{deformationMult.thm} a deformation
theorems for the second moment of the characteristic polynomial of certain
random matrix models, in both the \textquotedblleft additive\textquotedblright%
\ and \textquotedblleft multiplicative\textquotedblright\ settings. This
result provides a conceptual link between (on the one hand) the deformation of
certain random matrix models with respect to a parameter and (on the other
hand) the deformation of the associated characteristic polynomials by an
appropriate heat flow. We expect that a version of Theorem \ref{deformationAdditive.thm} will
hold for other families of random matrices, leading to
further connections between random matrices and differential flows on polynomials.
 Second, we show in Section \ref{moments.sec} that our
two main conjectures hold at the level of the holomorphic moments. Although
these moments do not uniquely determine the limiting root distributions, the
results of Section \ref{moments.sec} provide substantial confirmation of our
conjectures. We also remind the reader of rigorous results obtained for random
polynomials, in a separate paper with Jalowy and Kabluchko. See Remark
\ref{hhjk.remark}.

\section{A random matrix perspective\label{RMT.sec}}

In this section, we present a conjecture related to the \textquotedblleft
model deformation phenomenon\textquotedblright\ discussed in Section
\ref{motivation.sec}. A multiplicative version of this conjecture is presented
in Section \ref{multiplicativeConj.sec}.

\subsection{The heat flow conjecture for random
matrices\label{additiveConjecture.sec}}

We consider now a generalization of the random matrix model denoted $A_{t}$ in
(\ref{atmodel}) in Section \ref{motivation.sec}. Let $X^{N}$ and $Y^{N}$ be
independent $N\times N$ GUE matrices and consider a matrix of the form%
\begin{equation}
Z^{N}=e^{i\theta}(aX^{N}+ibY^{N}),\label{ZNdef}%
\end{equation}
where $a$ and $b$ are real numbers, assumed not both zero. We call such a
matrix a (rotated) \textbf{elliptic random matrix} (with the parameter
$\theta$ giving the rotation). It is convenient to parameterize such matrices
by a real, positive variance parameter $s$ and a complex covariance parameter
$\tau,$ given by%
\begin{align}
s &  =\mathbb{E}\left\{  \frac{1}{N}\operatorname{Trace}((Z^{N})^{\ast}%
Z^{N})\right\}  \nonumber\\
\tau &  =\mathbb{E}\left\{  \frac{1}{N}\operatorname{Trace}((Z^{N})^{\ast
}Z^{N})\right\}  -\mathbb{E}\left\{  \frac{1}{N}\operatorname{Trace}%
((Z^{N})^{2})\right\}  .\label{sAndTau}%
\end{align}
This parameterization is motivated by \cite{DHKcomplex}, where (in the case
$K_{\mathbb{C}}=\mathbb{C}^{N^{2}}$) the law of $Z^{N}$ is denoted as
$\mu_{s,\tau}$ and where the parameter $\tau$ plays the role of the complex
time variable in the Segal--Bargmann transform.

The parameters $s$ and $\tau$ completely determine the distribution of the
matrix $Z^{N}$; see \cite[Section 2.1]{HHmult} for details. Using the
Cauchy--Schwarz inequality, we can verify that
\begin{equation}
\left\vert \tau-s\right\vert \leq s.\label{tauIneq}%
\end{equation}
We label such a matrix as $Z_{s,\tau}^{N}$ and we then consider a random
matrix of the form%
\[
X_{0}^{N}+Z_{s,\tau}^{N},
\]
where $X_{0}^{N}$ is independent of $Z_{s,\tau}^{N}.$ We assume that all of
the expected $\ast$-moments of $X_{0}^{N}$---that is, expectation values of
normalized traces of words in $X_{0}^{N}$ and its adjoint---are well defined
and finite. The model (\ref{atmodel}) considered in Section
\ref{motivation.sec} corresponds to the case in which $s=1,$ $\tau$ is real,
and $X_{0}^{N}$ is Hermitian. 

Suppose now that $X_{0}^{N}$ converges almost surely in $\ast$-distribution to
an element $x_{0}$ in a tracial von Neumann algebra. Define%
\begin{equation}
z_{s,\tau}=e^{i\theta}(ax+iby) \label{zstauFree}%
\end{equation}
where $x$ and $y$ are freely independent semicircular elements and the
parameters $s$ and $\tau$ are define by analogy to (\ref{sAndTau}). When
$\left\vert \tau-s\right\vert <s,$ we may apply a result of \'{S}niady
\cite{Sniady} to conclude that the eigenvalue distribution of $X_{0}%
^{N}+X_{s,\tau}^{N}$ converges almost surely to the Brown measure of
$x_{0}+z_{s,\tau},$ where $x_{0}$ and $z_{s,\tau}$ are taken to be freely
independent. (When $\left\vert \tau-s\right\vert <s,$ the element $Z^{N}$ in
(\ref{ZNdef}) is equal in distribution to the sum of a multiple of a Ginibre
matrix and an independent elliptic matrix, so that Theorem 6 in \cite{Sniady} applies.)

\begin{conjecture}
[Arbitrary plus elliptic model]\label{ellipticPlus.conj}Fix $s>0$ and two
complex numbers $\tau_{0}$ and $\tau$ with $\left\vert \tau_{0}-s\right\vert
<s$ and $\left\vert \tau-s\right\vert \leq s.$ Let $\mu_{s,\tau_{0}}$ be the
Brown measure of $x_{0}+z_{s,\tau_{0}},$ where $z_{s,\tau_{0}}$ is defined as
in (\ref{zstauFree}) and where $x_{0}$ is freely independent of $z_{s,\tau
_{0}},$ and let $m_{s,\tau_{0}}$ be its Cauchy transform. Then $m_{s,\tau_{0}%
}$ is Lipschitz continuous and the Brown measure $\mu_{s,\tau}$ of
$x_{0}+z_{s,\tau}$ can be computed by the model deformation formula%
\begin{equation}
\mu_{s,\tau}=(\Phi_{s,\tau_{0},\tau})_{\ast}(\mu_{s,\tau_{0}}),
\label{pushAdd}%
\end{equation}
where the map $\Phi_{s,\tau_{0},\tau}:\mathbb{C}\rightarrow\mathbb{C}$ is
defined as%
\[
\Phi_{s,\tau_{0},\tau}(z)=z-(\tau-\tau_{0})m_{s,\tau_{0}}(z).
\]
Furthermore, as $\tau$ varies over the set $\left\vert \tau-s\right\vert <s$
(with $x$ and $s$ fixed), the log potential of $S_{s,\tau}$ is a $C^{1}$
solution of the PDE%
\begin{equation}
\frac{\partial S_{s,\tau}}{\partial\tau}=\frac{1}{2}\left(  \frac{\partial
S_{s,\tau}}{\partial z}\right)  ^{2}. \label{AddConjPDE}%
\end{equation}

\end{conjecture}

\begin{remark}
Substantial partial results in the direction of Conjecture
\ref{ellipticPlus.conj} have been obtained. First, when $x_{0}$ is
self-adjoint, the \textquotedblleft additive\textquotedblright\ counterpart of
the results of Hall--Ho \cite{HHmult} apply. (The proofs from \cite{HHmult}%
---which addresses the \textquotedblleft multiplicative\textquotedblright%
\ case---apply with minor modifications in the easier additive case.) In this
case, the push-forward result (\ref{pushAdd}) holds and the PDE
(\ref{AddConjPDE}) holds at least away from the boundary of the support of
$\mu_{s,\tau}.$ See Theorem 4.2, Proposition 6.3, Corollary 7.7 and Theorem
8.2 in \cite{HHmult}. Second, for general $x_{0}$ (assumed only to be freely
independent of $z_{s,\tau_{0}}$), the results of Zhong \cite{Zhong2} apply. A
push-forward result holds in this generality, at least for $\tau_{0}=s$
\cite[Theorem C]{Zhong2}. But since Zhong does not use PDE\ methods in
\cite{Zhong2}, he does not address whether the PDE (\ref{AddConjPDE}) holds.
\end{remark}

We emphasize that Conjecture \ref{ellipticPlus.conj} is not the main focus of
this paper. We present it to explain how Conjecture \ref{add1.conj} below
relates to the general proposal given in Conjecture \ref{introConj.conj}. The
idea is that the change from $\tau_{0}$ to $\tau$ in Conjecture
\ref{ellipticPlus.conj} can be carried out by applying the heat flow to the
characteristic polynomial of the random matrix with the first value of $\tau.$

\begin{conjecture}
\label{add1.conj}Fix $s>0$ and complex numbers $\tau_{0}$ and $\tau$ with
$\left\vert \tau_{0}-s\right\vert \leq s$ and $\left\vert \tau-s\right\vert
\leq s.$ Let \thinspace$Z_{s,\tau_{0}}^{N}$ and $Z_{s,\tau}^{N}$ be elliptic
random matrices with parameters defined as in (\ref{sAndTau}). Let $X_{0}^{N}$
be independent of $Z_{s,\tau}$ and $Z_{s,\tau_{0}}$ and assume $X_{0}^{N}$
converges almost surely in $\ast$-distribution to an element $x_{0}$ in a
tracial von Neumann algebra. Let $p_{s,\tau_{0}}$ be the random characteristic
polynomial of $X_{0}^{N}+Z_{s,\tau_{0}}^{N}$ and define a new random
polynomial $q_{s,\tau_{0},\tau}$ by%
\[
q_{s,\tau_{0},\tau}(z)=\exp\left\{  \frac{(\tau-\tau_{0})}{2N}\frac
{\partial^{2}}{\partial z^{2}}\right\}  p_{s,\tau_{0}}(z),\quad z\in
\mathbb{C}.
\]
Then the empirical root measure of $q_{s,\tau_{0},\tau}$ converges weakly
almost surely to the limiting eigenvalue distribution of $X_{0}^{N}+Z_{s,\tau
}^{N}.$
\end{conjecture}

The conjecture says that applying the heat operator for time $\tau-\tau_{0}$
to the polynomial $p_{s,\tau_{0}}$ gives a new polynomial $q_{s,\tau_{0},\tau
}$ whose roots resemble those of $p_{s,\tau}$---namely the eigenvalues of
$X_{0}^{N}+Z_{s,\tau}^{N}.$ Thus, the heat flow effectively changes the value
of $\tau$. We emphasize, however, that the conjecture is \textit{not} claiming
that the joint distribution of the roots of $q_{s,\tau_{0},\tau}$ is the same
as the joint distribution of the eigenvalues of $X_{0}^{N}+Z_{s,\tau}^{N}.$
Rather, the conjecture merely asserts that the \textit{limiting} eigenvalue
distributions of the two collections of points are the same. The points in
Figure \ref{evolvess.fig}, for example, correspond to the case $X_{0}^{N}=0,$
with $s=1,$ $\tau_{0}=0,$ and $\tau=1,$ so that $Z_{s,\tau_{0}}$ is GUE and
$Z_{s,\tau}$ is Ginibre. Although the points in the figure approximate the
uniform distribution on the unit disk, they are clearly distinguishable to the
eye from the eigenvalues of a Ginibre matrix.

\begin{remark}
If Conjectures \ref{ellipticPlus.conj} and \ref{add1.conj} hold, then
Conjecture \ref{introConj.conj} also holds---that is, the expected PDE\ and
push-forward results will hold for the limiting root distribution of
$q_{s,\tau_{0},\tau}$. But the argument we will present for Conjecture
\ref{add1.conj} in Section \ref{secMom.sec} is independent of the validity of
Conjecture \ref{ellipticPlus.conj}.
\end{remark}

\subsection{Examples}%

\begin{figure}[ptb]%
\centering
\includegraphics[
height=2.0141in,
width=4.0283in
]%
{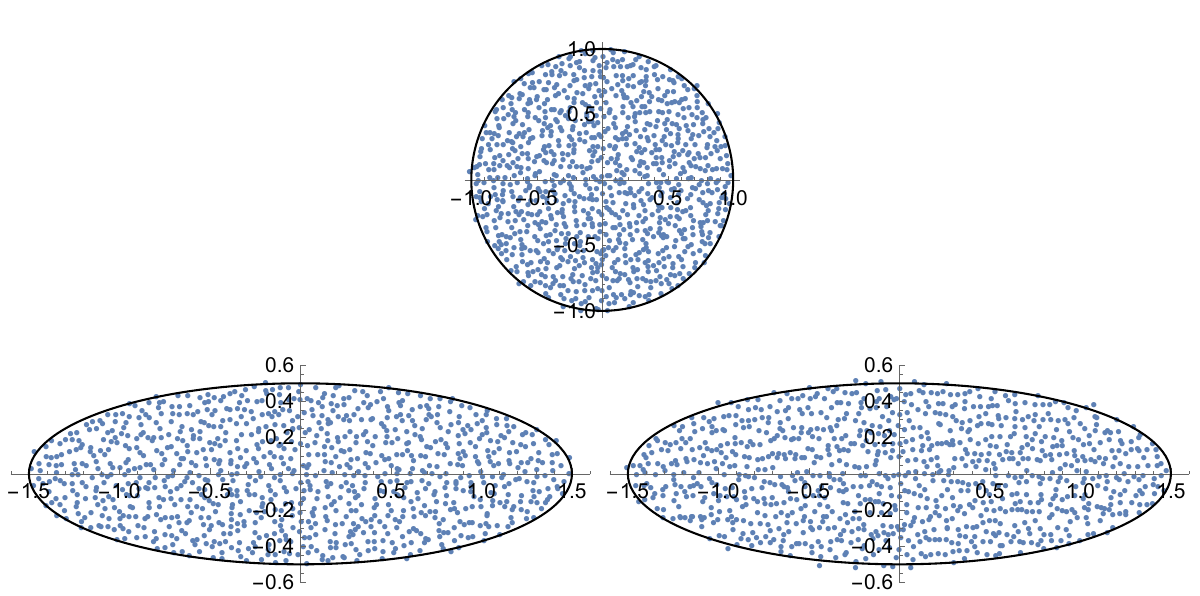}%
\caption{Eigenvalues of a Ginibre matrix (top), eigenvalues of an elliptic
matrix (bottom left), and roots of the heat-evolution of the characteristic
polynonial of the Ginibre matrix (bottom right). }%
\label{circtoellip.fig}%
\end{figure}

In this subsection, we present three examples of the general proposal in
Conjecture \ref{add1.conj}. We emphasize that all the examples still have the
status of a conjecture: special cases of the general conjecture. As our first
example, we consider the case in which $X_{0}^{N}$ is zero, and the parameters
in (\ref{sAndTau}) are $s=1$ and $\tau_{0}=1.$ In that case, $X_{0}%
^{N}+Z_{s,\tau_{0}}^{N}=Z_{1,1}^{N}$ is a Ginibre matrix. We then take
$\tau=1-t$, where $t$ is a real number between $-1$ and 1, so that
\[
(\tau-\tau_{0})=-t,\quad-1\leq t\leq1.
\]

\begin{example}
[Circular to elliptic and circular to semicircular]\label{circToSS.example}Let
$Z^{N}$ be an $N\times N$ random matrix chosen from the Ginibre ensemble and
let $p$ be its random characteristic polynomial. Fix a real number $t$ with
$-1\leq t\leq1$ and define a new random polynomial $q_{t}$ by%
\[
q_{t}(z)=\exp\left\{  -\frac{t}{2N}\frac{\partial^{2}}{\partial z^{2}%
}\right\}  p(z),\quad z\in\mathbb{C}.
\]
For $-1<t<1,$ Conjecture \ref{add1.conj} predicts that the empirical root
measure of $q_{t}$ will converge weakly almost surely to the uniform
probability measure on the ellipse centered at the origin with semi-axes $1+t$
and $1-t.$ For $t=1,$ Conjecture \ref{add1.conj} predicts that the empirical
root measure of $q_{1}$ will converge weakly almost surely to the semicircular
probability measure on $[-2,2]\subset\mathbb{R}.$

In this setting, the initial measure $\mu_{s,\tau_{0}}=\mu_{1,1}$ is the
uniform measure on the unit disk, so that $m_{s,\tau_{0}}(z)=\bar{z}$ for all
$z$ in the unit disk. Thus, (\ref{approxMotion}) predicts that, in this case,
the roots $\{z_{j}(t)\}_{j=1}^{N}$ should evolve as
\begin{equation}
z_{j}(t)\approx z_{j}(0)+t\overline{z_{j}(0)},\quad-1\leq t\leq1, \label{zOfT}%
\end{equation}
where $\{z_{j}(0)\}_{j=1}^{N}$ are the eigenvalues of $Z^{N}.$ In particular,
we should have%
\[
z_{j}(1)\approx2\operatorname{Re}[z_{j}(0)].
\]

\end{example}

Figure \ref{circtoellip.fig} shows the similarity between the roots of $q_{t}$
and the eigenvalues of the corresponding elliptic random matrix model. See
also Slide 4 in the supplemental document for an animation. Figure
\ref{traj1.fig}, meanwhile, illustrates the approximate straight-line motion
of the roots predicted in (\ref{zOfT}). See also Slide 5 in the supplemental
document for a dynamical version of Figure \ref{traj1.fig}. We note that a
rigorous version of the first paragraph of Example \ref{circToSS.example} has
been established in \cite[Theorem 2.1]{HHJK3}---but with the characteristic
polynomial of a Ginibre matrix replaced by a Weyl polynomial. (A Weyl
polynomial is a certain random polynomial whose limiting root distribution is
uniform on the unit disk.) In addition, a rigorous result in the direction of
(\ref{zOfT}) has been established for the individual zeros of the Gaussian
analytic function in \cite[Theorem 1.3]{HHJK1}.%

\begin{figure}[ptb]%
\centering
\includegraphics[
height=2.0487in,
width=3.7775in
]%
{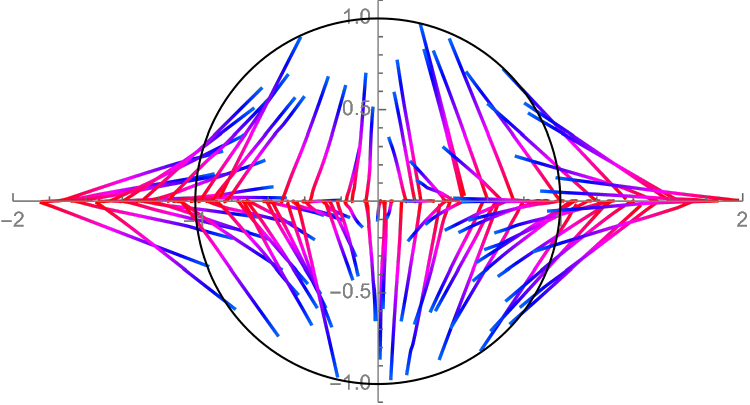}%
\caption{Plots of 100 of the curves $z_{j}(t),$ $0\leq t\leq1,$ in Example
\ref{circToSS.example}, starting from the eigenvalues of a $1,000\times1,000$
Ginibre matrix. Each curve changes color from blue to red as $t$ increases.}%
\label{traj1.fig}%
\end{figure}

As our second example of Conjecture \ref{add1.conj}, we consider the case in
which $X_{0}^{N}$ is zero, $s=1,$ and $\tau_{0}=0.$ In that case, $X_{0}%
^{N}+Z_{s,\tau_{0}}^{N}$ is a GUE matrix.

\begin{example}
[Semicircular to elliptic and semicircular to circular]%
\label{ssToCirc.example}Let $X^{N}$ be a GUE matrix and let $p$ be its random
characteristic polynomial. Fix a real number $t$ with $0\leq t\leq2$ and
define a new random polynomial $q_{t}$ by%
\[
q_{t}(z)=\exp\left\{  \frac{t}{2N}\frac{\partial^{2}}{\partial z^{2}}\right\}
p(z),\quad z\in\mathbb{C}.
\]
For $0<t<2,$ Conjecture \ref{add1.conj} predicts that the empirical root
measure of $q_{t}$ will converge weakly almost surely to the uniform
probability measure on the ellipse centered at the origin with semi-axes $2-t$
and $t.$ For $t=2,$ Conjecture \ref{add1.conj} predicts that the empirical
root measure of $q_{t}$ will converge weakly almost surely to the semicircular
distribution on the interval $[-2,2]$ on the imaginary axis.
\end{example}

We note that the eigenvalues of a GUE matrix are distinct with probability 1.
(The joint distribution of the eigenvalues is absolutely continuous on
$\mathbb{R}^{N}$---e.g., \cite[Theorem 2.5.2]{AGZ}--- so the measure of any
hyperplane where two eigenvalues are equal is zero.) Furthermore, inside the
collection of polynomials of degree $N$ with real coefficients, the set of
polynomials with distinct real roots is open. Since the heat evolution is
continuous, we can therefore conclude that the roots of $q_{t}(z)$ remain real
and distinct for sufficiently small positive values of $t.$ We believe,
however, that when $N$ is large, the roots will very quickly begin to collide
and move off the real axis. (This behavior is in the spirit of how the Riemann
xi function---derived from the Riemann zeta function---behaves under the heat
flow; see \cite{Tao3}. ) There is therefore no contradiction in conjecturing
that the $N\rightarrow\infty$ root distribution of $q_{t}$ will be uniform on
an ellipse in the \textit{plane}, even for small positive $t.$%

\begin{figure}[ptb]%
\centering
\includegraphics[
height=3.5276in,
width=2.8279in
]%
{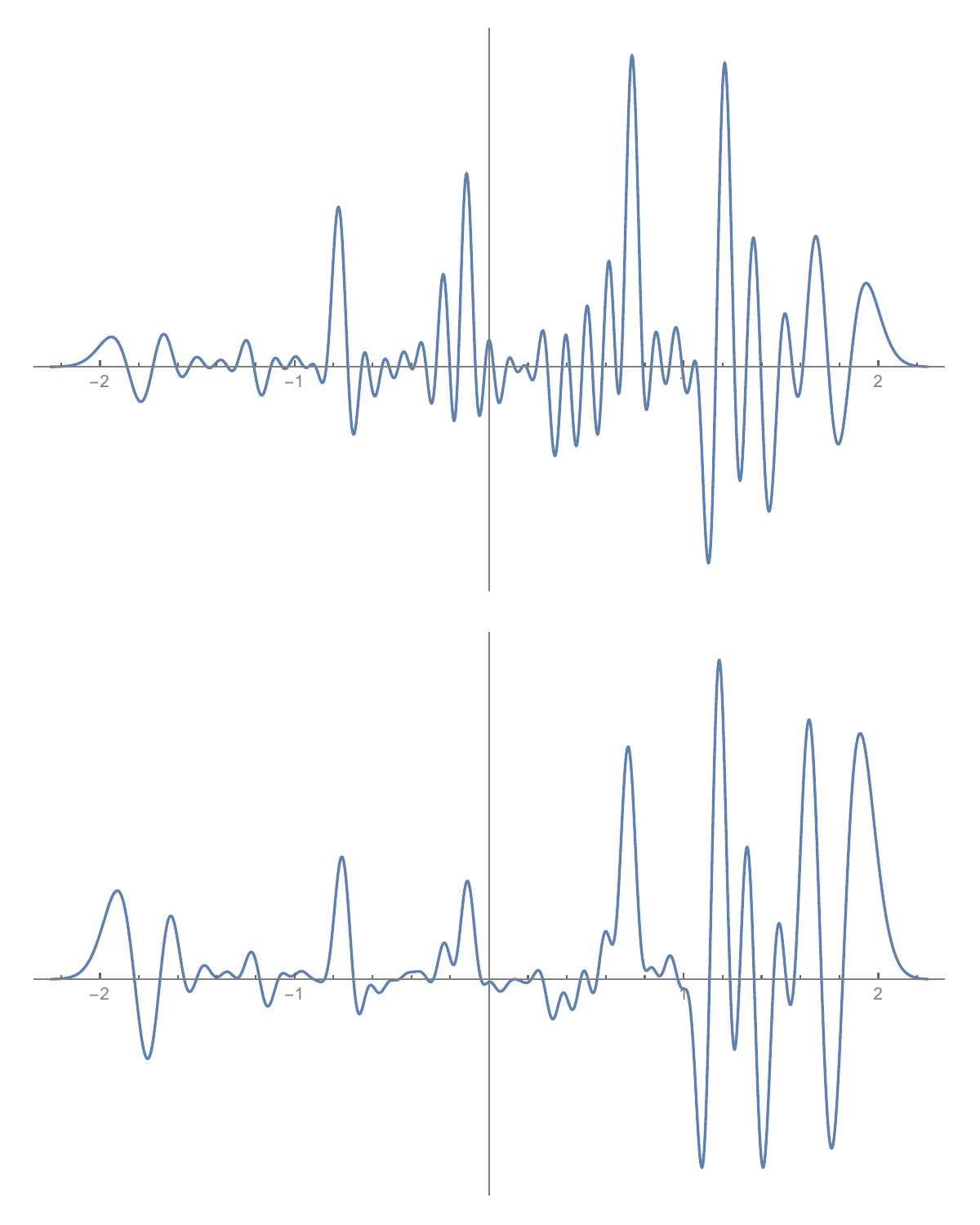}%
\caption{The characteristic polynomial of a GUE matrix and the heat-evolution
of the same polynomial at $t=0.05$, both multiplied by a Gaussian. Shown for
$N=60.$ The number of real roots is 60 (top) and 32 (bottom).}%
\label{evolvedpoly.fig}%
\end{figure}

The top part of Figure \ref{evolvedpoly.fig} shows the characteristic
polynomial $p$ of a GUE matrix with $N=60,$ multiplied by a suitable Gaussian
to make the values of a manageable size. (Specifically, it is convenient to
multiply $p(x)$ by $2^{N/2}e^{-Nx^{2}/4},$ which of course does not change the
zeros.) The bottom part of the figure then shows the polynomial $q_{t}$ with
$t=0.05,$ multiplied by the same Gaussian. Already by the time $t=0.05,$ the
number of real roots has dropped from $60$ to 32. Figure \ref{evolvess.fig}
then shows the complex roots of the heat-evolved polynomial at time $t=1.$ See
also Slide 6 in the supplemental document for an animation.%

\begin{figure}[ptb]%
\centering
\includegraphics[
height=2.1707in,
width=2.1767in
]%
{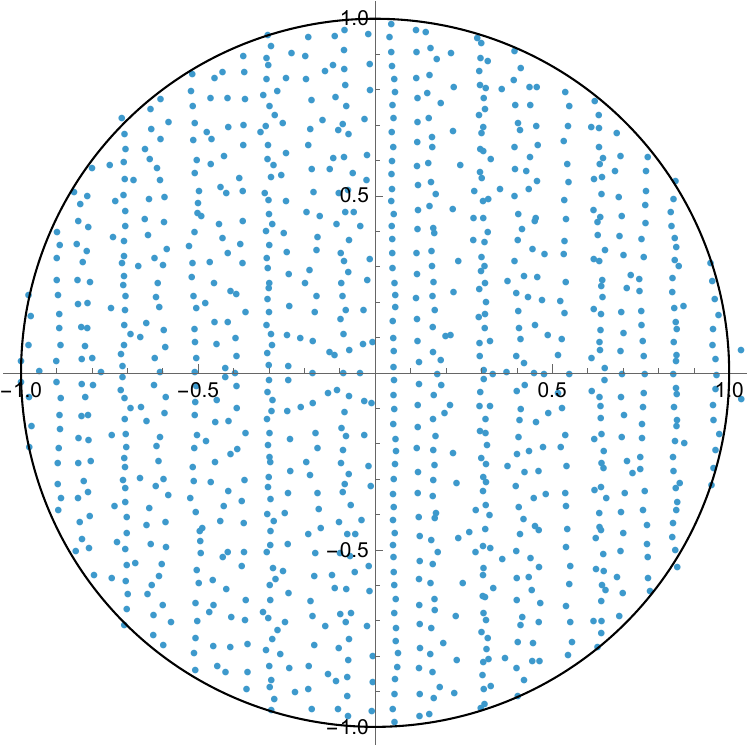}%
\caption{The points $\{z_{j}(1)\}_{j=1}^{N}$ in Example \ref{ssToCirc.example}%
, with $N=1,000.$ The points display an obvious banding structure in the
vertical direction but still approximate a uniform distribution on the unit
disk.}%
\label{evolvess.fig}%
\end{figure}

The anticipated trajectories of the roots in Example \ref{ssToCirc.example} is
more complicated than in Example \ref{circToSS.example}. After all, we are
effectively trying to run time backward from $t=1$ in the map (\ref{zOfT}),
even though the $t=1$ map $z\mapsto2\operatorname{Re}z$ is not invertible. To
put it a different way, Example \ref{ssToCirc.example} asserts that we can
deform a \textit{one}-dimensional distribution of points along the real axis
into a \textit{two}-dimensional uniform distribution on an ellipse or disk.
This deformation cannot be achieved by applying a smooth map of the sort we
have on the right-hand side of (\ref{zOfT}).

Rather, the behavior we expect is the following. If we write the roots
$\{z_{j}(t)\}_{j=1}^{N}$ of $q_{t}$ in the form $z_{j}(t)=x_{j}(t)+iy_{j}(t),$
then we expect to have the approximate equalities
\begin{align*}
x_{j}(t)  &  \approx x_{j}(0)-\frac{t}{2}x_{j}(0)\\
y_{j}(t)  &  \approx c_{j}t\sqrt{1-\frac{1}{4}x_{j}(0)^{2}},
\end{align*}
where $c_{j}$ is a \textit{random} constant uniformly distributed between $-1$
and $1.$ This predicted behavior can be understood as the limiting case of
Conjecture \ref{add1.conj} with $X_{0}^{N}=0,$ $s=1,$ and $\tau_{0}$ tending
to zero. See Figure \ref{traj2.fig}, along with Slide 7 in the supplemental document.%

\begin{figure}[ptb]%
\centering
\includegraphics[
height=2.5434in,
width=2.9776in
]%
{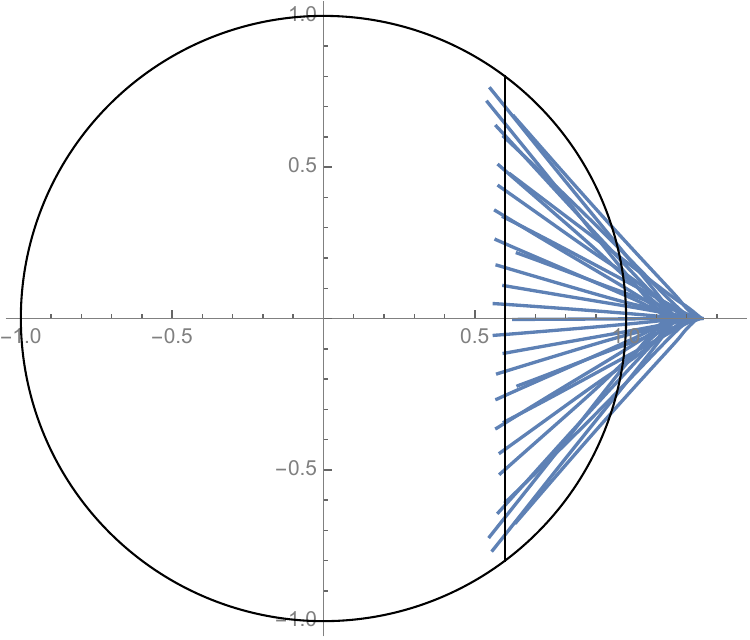}%
\caption{Plot of 30 of the curves $z_{j}(t)$ in Example \ref{ssToCirc.example}%
, with $N=1,000$ and $z_{j}(0)$ close to $1.2.$ When $t=1,$ the distribution
of points resembles a uniform distribution along the vertical segment in the
unit circle with $x$-coordinate $0.6.$}%
\label{traj2.fig}%
\end{figure}

As our third example of Conjecture \ref{add1.conj}, we consider the case in
which $X_{0}^{N}$ is Hermitian and $s=\tau_{0}=1.$ In that case, the limiting
eigenvalue distribution of $X_{0}^{N}+Z_{1,1}^{N}$ is computed in
\cite[Section 3]{HZ} and the limiting eigenvalue distribution of $X_{0}%
^{N}+Z_{1,\tau}^{N}$ is computed in \cite[Section 6]{Zhong2} (or the additive
version of \cite{HHmult}). We then further specialize to the case in which the
limiting eigenvalue distribution of $X_{0}^{N}$ has half of its mass at 1 and
half of its mass at $-1.$ Figure \ref{2taus.fig} shows the similarity between
the eigenvalues of $X_{0}^{N}+Z_{1,\tau}^{N}$ and the roots of the heat
evolution of the characteristic polynomial of $X_{0}^{N}+Z_{1,\tau_{0}}^{N}.$
See also Slide 8 in the supplemental document for an animation.

\subsection{Rigorous support for Conjecture \ref{add1.conj}: a deformation
result for second moments\label{secMom.sec}}

In this subsection, we present a rigorous result (Theorem
\ref{deformationAdditive.thm}) about the second moments of characteristic
polynomials of random matrices. We also give a heuristic argument for why
Theorem \ref{deformationAdditive.thm} should imply Conjecture \ref{add1.conj}.
This argument, however, requires concentration results that we currently do
not know how to establish.

Suppose $p^{N}$ is a family of random polynomials of degree $N$. In our
applications, we will take $p^{N}$ to be \textit{either} the characteristic
polynomial of a random matrix \textit{or} the heat evolution of such a
characteristic polynomial. We then consider the \textbf{second moment}
$D^{N}:\mathbb{C}\rightarrow\lbrack0,\infty)$ of $p^{N},$ defined as
\begin{equation}
D^{N}(z)=\mathbb{E}\{\left\vert p^{N}(z)\right\vert ^{2}%
\}.\label{secondMoment}%
\end{equation}
We then define%
\[
T^{N}(z)=\frac{1}{N}\log D^{N}(z)=\frac{1}{N}\log[\mathbb{E}\{\left\vert
p^{N}(z)\right\vert ^{2}\}]
\]
We expect to be able to recover the limiting root distribution of $p^{N}$ from
the function $D^{N}$ as follows:%
\begin{equation}
\text{limiting root distribution of }p^{N}=\frac{1}{4\pi}\Delta_{z}\left(
\lim_{N\rightarrow\infty}T^{N}(z)\right)  .\label{TNexpected}%
\end{equation}
That is to say, we expect that the large-$N$ limit of $T^{N}$ will be the log
potential of the limiting root distribution of $p^{N}.$ This sort of claim is
taken for granted in the physics literature (e.g., \cite{BGNTW1,BGNTW2}) but
it is not a rigorous result.%

\begin{figure}[ptb]%
\centering
\includegraphics[
height=2.009in,
width=4.1554in
]%
{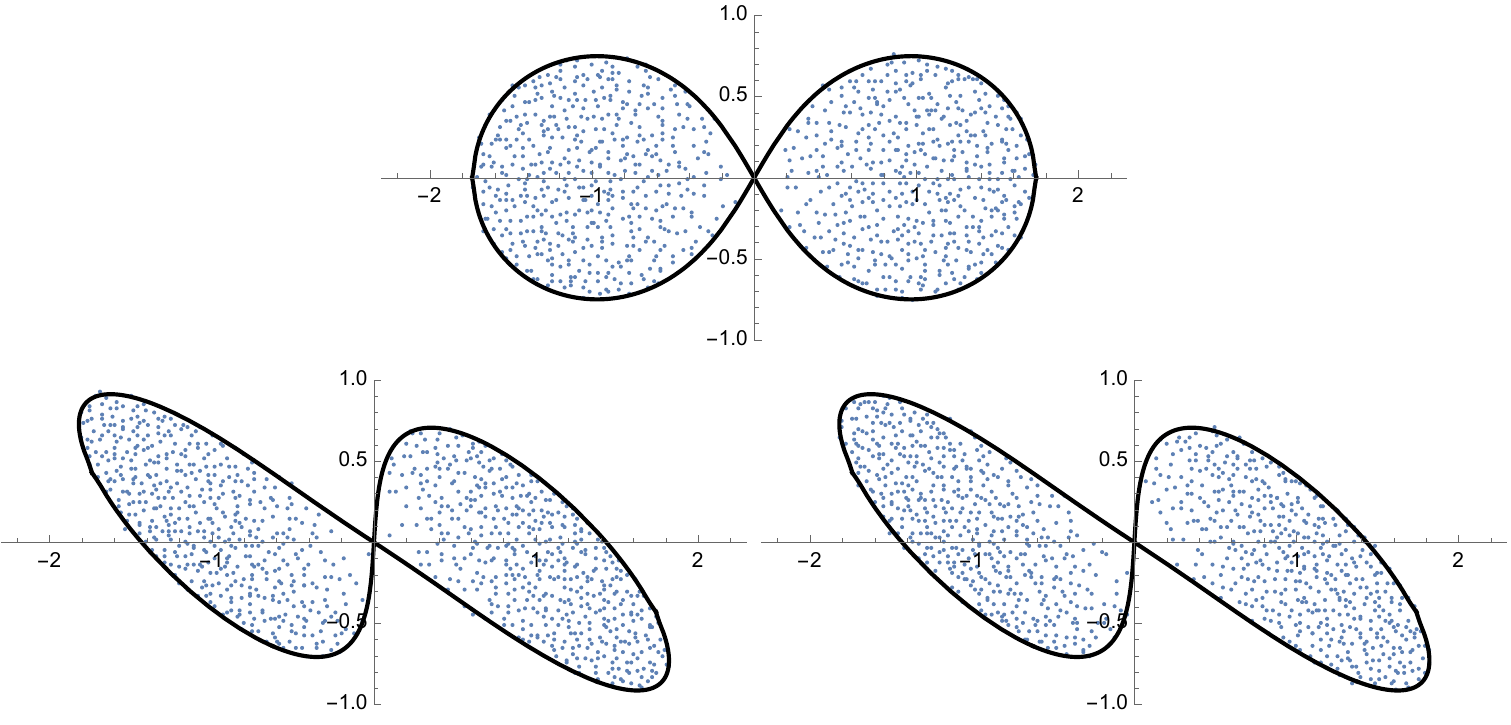}%
\caption{The $(s,\tau_{0})$ eigenvalues (top), the $(s,\tau)$ eigenvalues
(bottom left), and the $(\tau-\tau_{0})$-evolution of the $(s,\tau_{0})$
eigenvalues (bottom right), for the case that $X_{0}^{N}$ is Hermitian with
eigenvalues equally distributed between $-1$ and $1.$ Shown for $s=1,$
$\tau_{0}=1,$ and $\tau=i/2$}%
\label{2taus.fig}%
\end{figure}

To understand the claim in (\ref{TNexpected}), consider another function
obtained by interchanging the expectation value with the logarithm in the
formula for $T^{N},$ namely%
\begin{align*}
S^{N}(z)  &  =\frac{1}{N}\mathbb{E}\{\log\left\vert p^{N}(z)\right\vert
^{2}\}\\
&  =\mathbb{E}\left\{  \frac{1}{N}\sum_{j=1}^{N}\log\left\vert z-z_{j}%
^{N}\right\vert ^{2}\right\}  ,
\end{align*}
where $\{z_{j}^{N}\}_{j=1}^{N}$ is the collection of roots of $p^{N},$ listed
with their multiplicity. Then $\frac{1}{4\pi}\Delta_{z}S^{N}$ is easily seen
to be the \textit{expected empirical root distribution} of $Z^{N}.$ (Put the
Laplacian inside the expectation value and use that $\frac{1}{4\pi}%
\log\left\vert z\right\vert ^{2}$ is the Green's function for the Laplacian on
the plane.)

Suppose now that we have, as usual, a \textbf{concentration phenomenon}, in
which the root distribution of $p^{N}$ is approaching a deterministic limit as
$N$ goes to infinity. (In the case of characteristic polynomials of random
matrices, see, for example, Sections 2.3 and 4.4 in \cite{AGZ}.) In that case,
the large-$N$ limit of the \textit{expected} empirical root distribution
should be the almost sure limit of the eigenvalue distribution itself. In that
case,
\[
\text{limiting root distribution of }p^{N}=\frac{1}{4\pi}\Delta_{z}\left(
\lim_{N\rightarrow\infty}S^{N}(z)\right)  .
\]
But the same concentration phenomenon suggests that interchanging the
expectation value with the logarithm should not have much effect, so that
$S^{N}$ and $T^{N}$ should be almost equal. That is to say, if the empirical
measure of the set $\{z_{j}\}_{j=1}^{N}$ is, with high probability, close to a
deterministic measure $\mu,$ then \textit{both} $S^{N}$ and $T^{N}$ should be
close to the log potential of $\mu,$ and we should have
\begin{align*}
\text{limiting root distribution of }p^{N}  &  =\frac{1}{4\pi}\Delta
_{z}\left(  \lim_{N\rightarrow\infty}S^{N}(z)\right) \\
&  =\frac{1}{4\pi}\Delta_{z}\left(  \lim_{N\rightarrow\infty}T^{N}(z)\right)
,
\end{align*}
confirming (\ref{TNexpected}).

We consider the second moments (as in (\ref{secondMoment})) of the
characteristic polynomials of the random matrix models introduced in the
previous section, starting from the additive case. Consider an $N\times N$
\textquotedblleft elliptic\textquotedblright\ random matrix $Z_{s,\tau}^{N}$
with parameters $s$ and $\tau,$ as in (\ref{ZNdef}) and (\ref{sAndTau}). Take
another random matrix $X_{0}^{N}$ that is independent of $Z_{s,\tau}^{N}$ and
define a function $D^{N}$ by
\begin{equation}
D^{N}(z,s,\tau)=\mathbb{E}\{\left\vert \det(zI-(X_{0}^{N}+Z_{s,\tau}%
^{N}))\right\vert ^{2}\}. \label{DNdef}%
\end{equation}
Here $z$ ranges over $\mathbb{C}$, $s$ ranges over $(0,\infty),$ and $\tau$
ranges over the set of complex numbers with $\left\vert \tau-s\right\vert <s.$
Of course, this function depends also on the distribution of the random matrix
$X_{0}^{N}$ but we suppress this dependence in the notation.

We now come to the main theorem supporting the additive heat flow conjecture
(Conjecture \ref{add1.conj}).

\begin{theorem}
[Deformation theorem for second moment]\label{deformationAdditive.thm}Suppose
$\tau_{0}$ and $\tau$ are complex numbers satisfying $\left\vert \tau
_{0}-s\right\vert \leq s$ and $\left\vert \tau-s\right\vert \leq s,$ in
accordance with (\ref{tauIneq}). Consider random matrices $X_{0}^{N}%
+Z_{s,\tau_{0}}^{N}$ and $X_{0}^{N}+Z_{s,\tau}^{N},$ respectively, where
$X_{0}^{N}$ is independent of $Z_{s,\tau_{0}}^{N}$ and $Z_{s,\tau}^{N}$ but
not necessarily Hermitian. Then the function $D^{N}$ in (\ref{DNdef}) can also
be computed as%
\begin{equation}
D^{N}(z,s,\tau)=\mathbb{E}\left\{  \left\vert \exp\left(  \frac{(\tau-\tau
_{0})}{2N}\frac{\partial^{2}}{\partial z^{2}}\right)  \det(zI-(X_{0}%
^{N}+Z_{s,\tau_{0}}^{N}))\right\vert ^{2}\right\}  . \label{Dsecond}%
\end{equation}

\end{theorem}

The proposition says that one can compute $D^{N}$ in two different ways:
first, according to the definition (\ref{DNdef}), by taking the expectation of
the magnitude-squared of the characteristic polynomial of the $(s,\tau
)$-model; or, second, by applying the heat operator $\exp\left(  \frac
{(\tau-\tau_{0})}{N}\frac{\partial^{2}}{\partial z^{2}}\right)  $ to the
characteristic polynomial of the $(s,\tau_{0})$-model and then taking the
expectation of magnitude-square of this new polynomial.

In the notation of Conjecture \ref{add1.conj}, Theorem
\ref{deformationAdditive.thm} says that the random polynomials $q_{s,\tau
_{0},\tau}$ and $p_{s,\tau}$ \textit{have the same second moments}. Since we
expect to recover the limiting root distributions of $q_{s,\tau_{0},\tau}$ and
$p_{s,\tau}$ from their second moments, as in (\ref{TNexpected}), Theorem
\ref{deformationAdditive.thm} provides a strong motivation for Conjecture
\ref{add1.conj}.

Let us use the notation $\{z_{j}^{s,\tau}\}_{j=1}^{N}$ for the roots of
$p_{s,\tau}$ and the notation $\{z_{j}^{s,\tau_{0}}(\tau)\}_{j=1}^{N}$ for the
roots of $q_{s,\tau_{0},\tau}\,$(i.e., the roots of $p_{s,\tau_{0}},$ evolved
for time $\tau$ by the heat flow). Then Theorem \ref{deformationAdditive.thm}
can be restated as%
\[
\mathbb{E}\left\{  \left\vert \prod_{j=1}^{N}(z-z_{j}^{s,\tau})\right\vert
^{2}\right\}  =\mathbb{E}\left\{  \left\vert \prod_{j=1}^{N}(z-z_{j}%
^{s,\tau_{0}}(\tau))\right\vert ^{2}\right\}  .
\]

\subsection{Proof of Theorem \ref{deformationAdditive.thm}}

The theorem will following easily from the following result, which is of
independent interest.

\begin{proposition}
\label{DPDE.prop}The function $D^{N}$ in (\ref{DNdef}) satisfies the
second-order linear PDEs%
\begin{align}
\frac{\partial D^{N}}{\partial\tau}  &  =\frac{1}{2N}\frac{\partial^{2}D^{N}%
}{\partial z^{2}}\label{dDdTau}\\
\frac{\partial D^{N}}{\partial\bar{\tau}}  &  =\frac{1}{2N}\frac{\partial
^{2}D^{N}}{\partial\bar{z}^{2}} \label{dDdTauBar}%
\end{align}
for $\tau$ in the set $\left\vert \tau-s\right\vert <s.$
\end{proposition}

In the proposition, it is understood that the distribution of the random
matrix $X_{0}^{N}$ is fixed (independent of $\tau$) for each $N.$

We establish two lemmas that will be used to prove the proposition. The first
gives the PDE\ satisfied by the probability density of the random matrix
$Z_{s,\tau}^{N}.$ The second relates derivatives of a characteristic
polynomial in the matrix variable to derivatives in the complex variable.

Consider $M_{N}(\mathbb{C})$ as a real vector space of dimension $2N^{2},$
equipped with the real-valued inner product $\left\langle \cdot,\cdot
\right\rangle _{N}$ given by the scaled Hilbert--Schmidt inner product:
\[
\left\langle Z,W\right\rangle _{N}=N\operatorname{Re}[\operatorname{Trace}%
(Z^{\ast}W)].
\]
(The density of the Ginibre ensemble is a constant times $e^{-\left\langle
Z,Z\right\rangle _{N}}.$) We choose an orthonormal basis $\{X_{j}%
\}_{j=1}^{N^{2}}\cup\{Y_{j}\}_{j=1}^{N^{2}}$ such that $X_{j}$ is Hermitian
and $Y_{j}=iX_{j}.$ We then form the translation-invariant differential
operators $\tilde{X}_{j}$, $\tilde{Y}_{j},$ $Z_{j},$ and $\bar{Z}_{j}$ on
$M_{N}(\mathbb{C})$ as%
\begin{align}
\tilde{X}_{j}f(A)  &  =\left.  \frac{d}{du}f(A+uX_{j})\right\vert _{u=0}%
;\quad\tilde{Y}_{j}f(A)=\left.  \frac{d}{du}f(A+uY_{j})\right\vert
_{u=0};\nonumber\\
Z_{j}  &  =\frac{1}{2}(\tilde{X}_{j}-i\tilde{Y}_{j});\quad\bar{Z}_{j}=\frac
{1}{2}(\tilde{X}_{j}+i\tilde{Y}_{j}). \label{ZjZjBar}%
\end{align}
We then introduce operators $\Delta,$ $\partial^{2},$ and $\bar{\partial}^{2}$
given by%
\begin{equation}
\Delta=\sum_{j=1}^{N^{2}}\tilde{X}_{j}^{2};\quad\partial^{2}=\sum_{j=1}%
^{N^{2}}Z_{j}^{2};\quad\bar{\partial}^{2}=\sum_{j=1}^{N^{2}}\bar{Z}_{j}.
\label{threeOps}%
\end{equation}

\begin{lemma}
\label{firstID.lem}Let $\gamma_{s,\tau}^{N}$ be the density of the law of the
random matrix $Z_{s,\tau}^{N}$ defined in Section \ref{additiveConjecture.sec}%
. Then $\gamma_{s,\tau}^{N}$ satisfies the PDE%
\begin{equation}
\frac{\partial\gamma_{s,\tau}^{N}}{\partial\tau}=-\frac{1}{2}\partial
^{2}\gamma_{s,\tau}^{N}. \label{gammaPDE}%
\end{equation}
Furthermore, if $f$ is a (not necessarily holomorphic) polynomial function on
the space $M_{N}(\mathbb{C})$ of all $N\times N$ matrices with complex
entries, we have%
\begin{equation}
\frac{\partial}{\partial\tau}\mathbb{E}\{f(X_{0}^{N}+Z_{s,\tau}^{N}%
)\}=-\frac{1}{2}\mathbb{E}\{\partial^{2}f(X_{0}^{N}+Z_{s,\tau}^{N})\}.
\label{diffExpect}%
\end{equation}

\end{lemma}

\begin{proof}
Let $\Gamma_{s,\tau}^{N}$ be the Gaussian measure on $M_{N}(\mathbb{C})$
describing the law of $Z_{s,\tau}^{N}.$ It is given by
\[
\gamma_{s,\tau}^{N}=\exp\left\{  \frac{1}{2}\Delta_{s,\tau}\right\}
(\delta_{0})
\]
where $\delta_{0}$ is a $\delta$-function at the origin and where
$\Delta_{s,\tau}$ is defined as%
\begin{equation}
\Delta_{s,\tau}=s\Delta-\tau\partial^{2}-\bar{\tau}\bar{\partial}^{2}.
\label{DeltaStau}%
\end{equation}
(The formula (\ref{DeltaStau}) is equivalent to Eq. (1.7) in \cite{DHKcomplex}%
; see also the equation between Eqs. (1.13) and (1.14) in \cite{DHKcomplex}.)
This operator is elliptic precisely when $\left\vert \tau-s\right\vert <s$ and
semi-elliptic in the borderline case $\left\vert \tau-s\right\vert =s.$ The
PDE (\ref{gammaPDE}) follows formally from the commutativity of the operators
on the right-hand side of (\ref{DeltaStau}). Rigorously, it is
straightforward, if not terribly illuminating, to verify (\ref{gammaPDE}) from
the explicit formula for $\gamma_{s,\tau}^{N}$ in \cite[Eq. (1.15)]%
{DHKcomplex}.

Meanwhile, since $X_{0}^{N}$ is independent of $Z_{s,\tau}^{N},$ we have%
\begin{equation}
\mathbb{E}\{f(X_{0}^{N}+Z_{s,\tau}^{N})\}=\int\int f(a+b)\gamma_{s,\tau}%
^{N}(b)~db~d\nu(a), \label{intAB}%
\end{equation}
where $db$ is the Lebesgue measure on $M_{N}(\mathbb{C})$ and $\nu$ is the law
of $X_{0}^{N}.$ Now, in Section \ref{additiveConjecture.sec}, we assume that
all of the expected $\ast$-moments of $X_{0}^{N}$ are finite, which in
particular guarantees that the integral on the right-hand side of
(\ref{intAB}) is convergent. It is then not hard to justify differentiating
with respect to $\tau$ under the integral on the right-hand side of
(\ref{intAB}). Using (\ref{gammaPDE}), we obtain%
\begin{equation}
\frac{\partial}{\partial\tau}\mathbb{E}\{f(X_{0}^{N}+Z_{s,\tau}^{N}%
)\}=-\frac{1}{2}\int\int f(a+b)\partial^{2}\gamma_{s,\tau}^{N}(b)~db~d\nu(a).
\label{intAB2}%
\end{equation}
Then since $\gamma_{s,\tau}^{N}$ and its derivatives have rapid decay at
infinity and $f$ is a polynomial, we can integrate by parts in the inner
integral on the right-hand side of (\ref{intAB2}) to obtain%
\begin{equation}
\frac{\partial}{\partial\tau}\mathbb{E}\{f(X_{0}^{N}+Z_{s,\tau}^{N}%
)\}=-\frac{1}{2}\int\int\partial^{2}f(a+b)\gamma_{s,\tau}^{N}(b)~db~d\nu(a).
\label{intAB3}%
\end{equation}
Here, the operator $\partial^{2}$ on the right-hand side should in principle
be acting in the $b$ variable. But since $\partial^{2}$ is a
translation-invariant operator, this is the same as first applying
$\partial^{2}$ to the function $f$ and then evaluating at $a+b.$

Alternatively, we may appeal to (the commutative case of) Proposition 4.7 in
\cite{DHKcomplex}, which tells us that the inner integral in (\ref{intAB}) can
be computed (for polynomials functions $f$) as
\begin{equation}
\int f(a+b)\gamma_{s,\tau}^{N}(b)=\left.  \left(  \exp\left\{  \frac{1}%
{2}\Delta_{s,\tau}\right\}  f(a+b)\right)  \right\vert _{b=0}, \label{intAB4}%
\end{equation}
where the exponential is computed as a convergent power series in powers of
$\Delta_{s,\tau}.$ At this point, the (commuting!) operators that make up
$\Delta_{s,\tau}$ in (\ref{DeltaStau}) become operators in some
\textit{finite-dimensional} space of polynomials of degree at most $k.$ It is
therefore permissible to differentiate $\exp\left\{  \frac{1}{2}\Delta
_{s,\tau}\right\}  $ with respect to $\tau$ in the obvious way, pulling down a
factor of $-\frac{1}{2}\partial^{2}$ onto $f.$ We may then apply
(\ref{intAB4}) in the opposite direction and we will arrive at (\ref{intAB3}).
\end{proof}

\begin{lemma}
\label{secondID.lem}Let $A$ be a variable ranging over $M_{N}(\mathbb{C})$,
let $z$ be a variable ranging over $\mathbb{C}$, and let $B$ be a fixed
matrix. Then%
\begin{equation}
\partial^{2}\det(zI-(B+A))=-\frac{1}{N}\frac{d^{2}}{dz^{2}}\det(zI-(B+A)),
\label{Del2dz2}%
\end{equation}
where $\partial^{2}$ is the operator defined in (\ref{threeOps}).
\end{lemma}

\begin{proof}
Both sides of (\ref{Del2dz2}) are easily seen to be polynomials in $z$ and the
entries of $A.$ It therefore suffices to establish the identity on the
nonempty open set of pairs $(A,z)$ where $zI-A$ is invertible. We will use the
notation%
\[
Q=(zI-(B+A))
\]%
\[
R=Q^{-1}%
\]
and we can verify the following basic rules for computing:%
\[
Z_{j}A=X_{j},
\]
where $Z_{j}$ is as in (\ref{ZjZjBar}). We then use Jacobi's formula
\cite[Section 8.3]{MagNeu} for the derivative of the determinant (invertible
case):%
\[
\frac{d}{du}\det(A(u))=\det(A(u))\mathrm{Trace}\left(  A^{-1}\frac{dA}%
{du}\right)  ,
\]
where $\mathrm{Trace}$ is the ordinary trace; and the formula \cite[Section
8.4]{MagNeu} for the derivative of the inverse:%
\[
\frac{d}{du}A(u)^{-1}=-A(u)^{-1}\frac{dA}{du}A(u)^{-1}.
\]

In what follows, we use the normalized trace, $\mathrm{tr}[\cdot
]=\mathrm{Trace}[\cdot]/N.$ Using the tools in the previous paragraph, we can
easily compute%
\begin{align*}
Z_{j}\det(Q)  &  =-N\det(Q)\mathrm{tr}[RX_{j}]\\
Z_{j}^{2}\det(Q)  &  =N^{2}\det(Q)\mathrm{tr}[RX_{j}]^{2}-N\det(Q)\mathrm{tr}%
[RX_{j}RX_{j}].
\end{align*}
We then use the \textquotedblleft magic formulas\textquotedblright\ for sums
involving an orthonormal basis $\{X_{j}\}_{j=1}^{N^{2}}$ as in the proof of
Lemma \ref{firstID.lem}:%
\begin{align}
\sum_{j}X_{j}AX_{j}  &  =\operatorname{tr}[A]I\label{magic1}\\
\sum_{j}\operatorname{tr}[X_{j}A]\operatorname{tr}[X_{j}B]  &  =\frac{1}%
{N^{2}}\operatorname{tr}[AB]. \label{magic2}%
\end{align}
(See, for example, \cite[Proposition 3.1]{DHKlargeN}, with a change of sign
because the $X_{j}$'s here are Hermitian rather than skew-Hermitian.) We
therefore obtain:%
\begin{equation}
\partial^{2}\det(Q)=\det(Q)\mathrm{tr}[R^{2}]-N\det(Q)\mathrm{tr}[R]^{2}.
\label{DelSquaredDet}%
\end{equation}

Meanwhile, a similar computation shows
\begin{align}
\frac{d}{dz}\det(Q)  &  =N\det(Q)\mathrm{tr}[R]\label{dzDet}\\
\frac{d^{2}}{dz^{2}}\det(Q)  &  =N^{2}\det(Q)\mathrm{tr}[R]^{2}-N\det
(Q)\mathrm{tr}[R^{2}]. \label{dzSquaredDet}%
\end{align}
Comparing (\ref{DelSquaredDet}) to (\ref{dzSquaredDet}) gives the claimed result.
\end{proof}

We these lemmas in hand, we are ready for the proof of Proposition
\ref{DPDE.prop}.

\begin{proof}
[Proof of Proposition \ref{DPDE.prop}]We apply the identity (\ref{diffExpect})
in Lemma \ref{firstID.lem} to the function $f$ given by%
\[
f(A)=\left\vert \det(zI-(B+A))\right\vert ^{2}.
\]
We note that $\partial^{2}$ treats antiholomorphic functions as constants. We
thus obtain%
\begin{align}
\frac{\partial D^{N}}{\partial\tau}  &  =-\frac{1}{2}\mathbb{E}\{\partial
^{2}\left\vert \det(zI-(X_{0}^{N}+Z_{s,\tau}^{N}))\right\vert ^{2}%
\}\nonumber\\
&  =-\frac{1}{2}\mathbb{E}\left\{  \overline{\det(zI-(X_{0}^{N}+Z_{s,\tau}%
^{N}))}~\partial^{2}\det(zI-(X_{0}^{N}+Z_{s,\tau}^{N}))\right\}  .
\label{DNdiffAdd}%
\end{align}
Using Lemma \ref{secondID.lem}, (\ref{DNdiffAdd}) becomes%
\begin{align*}
\frac{\partial D^{N}}{\partial\tau}  &  =\frac{1}{2N}\mathbb{E}\left\{
\overline{\det(zI-A)}~\frac{\partial^{2}}{\partial z^{2}}\det(zI-A)\right\} \\
&  =\frac{1}{2N}\frac{\partial^{2}}{\partial z^{2}}D_{\tau}^{N},
\end{align*}
as claimed in (\ref{dDdTau}). Since $D^{N}$ is real-valued, (\ref{dDdTauBar}) follows.
\end{proof}

Now for the proof of Theorem \ref{deformationAdditive.thm}.

\begin{proof}
[Proof of Theorem \ref{deformationAdditive.thm}]Let $\tilde{D}_{\tau_{0}}%
^{N}(s,\tau,z)$ denote the function on the right-hand side of (\ref{Dsecond}),
so that when $\tau=\tau_{0},$ we have $\tilde{D}_{\tau_{0}}^{N}(s,\tau
_{0},z)=D^{N}(s,\tau_{0},z).$ Our goal is to show that $\tilde{D}_{\tau_{0}%
}^{N}=D^{N}.$ The function $\tilde{D}_{\tau_{0}}^{N}$ can be computed as%
\begin{align}
&  \tilde{D}_{\tau_{0}}^{N}(s,\tau,z)\nonumber\\
&  =\mathbb{E}\left\{  \exp\left(  \frac{1}{2N}\left(  (\tau-\tau_{0}%
)\frac{\partial^{2}}{\partial z^{2}}+(\bar{\tau}-\bar{\tau}_{0})\frac
{\partial^{2}}{\partial\bar{z}^{2}}\right)  \right)  \left\vert \det
(zI-(X_{0}^{N}+Z_{s,\tau_{0}}^{N}))\right\vert ^{2}\right\} \nonumber\\
&  =\exp\left(  \frac{1}{2N}\left(  (\tau-\tau_{0})\frac{\partial^{2}%
}{\partial z^{2}}+(\bar{\tau}-\bar{\tau}_{0})\frac{\partial^{2}}{\partial
\bar{z}^{2}}\right)  \right)  \mathbb{E}\left\{  \left\vert \det(zI-(X_{0}%
^{N}+Z_{s,\tau_{0}}^{N}))\right\vert ^{2}\right\}  . \label{DNN}%
\end{align}

From the last expression in (\ref{DNN}), we can see that $\tilde{D}_{\tau_{0}%
}^{N}$ satisfies the same PDEs (\ref{dDdTau}) and (\ref{dDdTauBar}) as
$D^{N}(s,\tau,z),$ as a function of $\tau$ and $z$. Thus,
\begin{equation}
\tilde{D}_{\tau_{0}}^{N}(s,\tau_{0}+t(\tau-\tau_{0}),\tau_{0},z)\text{ and
}D^{N}(s,\tau_{0}+t(\tau-\tau_{0}),z) \label{twoFunctions}%
\end{equation}
will satisfy the same PDE in $t$ and $z$ for $0\leq t\leq1,$ with equality at
$t=0.$ Since both functions are, for all values of the other variables,
polynomials in $z$ and $\bar{z}$ of degree $2N,$ the PDE in $t$ and $z$ is
actually an ODE with values in a finite-dimensional vector space. Thus, by
uniqueness of solutions of ODEs, we conclude that the two functions in
(\ref{twoFunctions}) are equal for all $t$; setting $t=1$ gives the claimed result.
\end{proof}

\subsection{A \textquotedblleft counterexample\textquotedblright}

Example \ref{ssToCirc.example} describes what we expect to happen if we evolve
the characteristic polynomial of a GUE matrix by the forward heat flow
$\exp\{\frac{t}{2N}\frac{d^{2}}{dz^{2}}\}$. The roots start out at $t=0$ with
an asymptotically semicircular distribution on $[-2,2].$ Then for $0<t<2,$ the
roots should be asymptotically uniform on an ellipse with semi-axes $2-t$ and
$t.$

One might then expect that if $p_{0}^{N}$ is \textit{any} sequence of
real-rooted polynomials of degree $N$ having an asymptotically semicircular
distribution of roots on $[-2,2],$ the heat-evolved polynomials would have the
same limiting root distribution as in the previous paragraph. But this claim
is false. A counterexample is provided by the (scaled) Hermite polynomials%
\[
H_{N}(z)=\exp\left\{  -\frac{1}{2N}\frac{d^{2}}{dz^{2}}\right\}  z^{N}.
\]
These polynomials are known \cite{CalPer} to have an asymptotically
semicircular distribution of roots on $[-2,2].$ But if we apply the forward
heat equation, we get%
\[
\exp\left\{  \frac{t}{2N}\frac{d^{2}}{dz^{2}}\right\}  H_{N}(z)=\exp\left\{
-\frac{(1-t)}{2N}\frac{d^{2}}{dz^{2}}\right\}  z^{N}.
\]
For $0<t<1,$ the above polynomials are just rescaled Hermite polynomials and
have an asymptotically semicircular distribution on $[-2\sqrt{1-t},2\sqrt
{1-t}].$

Of course, the result in the preceding paragraph does not actually contradict
the conjecture in Example \ref{ssToCirc.example}, which is only about the
characteristic polynomial of GUE matrices. Nor does it contradict Conjecture
\ref{introConj.conj}, since the Cauchy transform of the semicircular law is
discontinuous on the interval $[-2,2].$ It is nevertheless notable that such
different behavior can arise from two sequences of polynomials with the same
limiting root distribution. We attribute this difference in behavior to the
very evenly spaced nature of the zeros of the Hermite polynomials, which
contrasts with the random fluctuations in the spacings that occurs for the
eigenvalues of a GUE matrix.

Although the (rigorous) behavior of a heat-evolved Hermite polynomial
contrasts with the (conjectural) behavior of the characteristic polynomial of
a GUE, there is one common aspect to the two. Consider the holomorphic moments
$M_{a,N}$ of a sequence $p^{N}$ of polynomials of degree $N,$ namely%
\[
M_{a,N}=\frac{1}{N}\sum_{j=1}^{N}(z_{j}^{N})^{a},\quad a=0,1,2,\ldots,
\]
where $\{z_{j}^{N}\}_{j=1}^{N}$ are the roots of $p^{N}.$ According to Theorem
\ref{holoMoment.thm} in Section \ref{moments.sec}, the large-$N$ behavior of
the holomorphic moments of heat-evolved polynomials is completely determined
by the large-$N$ behavior of the holomorphic moments of the initial
polynomials. Thus, heat-evolved Hermite polynomials and heat-evolved
characteristic polynomials of a GUE will have the same limiting holomorphic
moments. These moments, however, do not uniquely determine the limiting root
distribution in the case that the roots are complex.

\section{A dynamical systems perspective\label{dynamical.sec}}

In this section, we record the system of differential equations satisfied by
the roots of a polynomial as the polynomial itself evolves according to the
heat equation. We then use this result to argue for the push-foward result in
Point \ref{push.point} of Conjecture \ref{introConj.conj}. A similar argument
holds in the multiplicative case.

\subsection{The dynamics of the roots\label{dynAdd.sec}}

We now record a well-known (and elementary) result about the evolution of
zeros of a polynomial undergoing the heat flow.

\begin{proposition}
\label{ODEadd.prop}Let $p_{0}^{N}$ be a polynomial of degree $N$ and let
$p_{\tau}^{N}$ be the heat evolution of $p_{0}^{N},$ as defined in
(\ref{pTauDef}), so that
\begin{equation}
\frac{\partial p_{\tau}^{N}}{\partial\tau}=\frac{1}{2N}\frac{\partial
^{2}p_{\tau}^{N}}{\partial z^{2}}. \label{qTauPDE}%
\end{equation}

Suppose that, for some $\sigma\in\mathbb{C},$ the zeros of $p_{\sigma}$ are
distinct. Then for all $\tau$ in a neighborhood of $\sigma,$ it is possible to
order the zeros of $p_{\tau}^{N}$ as $z_{1}(\tau),\ldots,z_{N}(\tau)$ so that
each $z_{j}(\tau)$ depends holomorphically on $\tau$ and so that the
collection $\{z_{j}(\tau)\}_{j=1}^{N}$ satisfies the following system of
holomorphic differential equations:%
\begin{equation}
\frac{dz_{j}(\tau)}{d\tau}=-\frac{1}{N}\sum_{k\neq j}\frac{1}{z_{j}%
(\tau)-z_{k}(\tau)}. \label{theODE}%
\end{equation}
The paths $z_{j}(\tau)$ then satisfy%
\begin{equation}
\frac{d^{2}z_{j}(\tau)}{d\tau^{2}}=-\frac{2}{N^{2}}\sum_{k\neq j}\frac
{1}{(z_{j}(\tau)-z_{k}(\tau))^{3}}. \label{secondDeriv}%
\end{equation}

\end{proposition}

The sums on the right-hand side of (\ref{theODE}) and (\ref{secondDeriv}) are
over all $k$ different from $j,$ with $j$ fixed. The result in (\ref{theODE})
is discussed on Terry Tao's blog \cite{Tao1} and dates back at least to the
work of Csordas, Smith, and Varga \cite[Lemma 2.4]{CSV}. The result that
(\ref{theODE}) implies (\ref{secondDeriv}) dates back at least to work of
Choodnovsky and Choodnovsky \cite[Corollary 7]{CC}. We emphasize that the
polynomial $p_{\tau}^{N}$ in (\ref{pTauDef}) is well-defined for all $\tau
\in\mathbb{C}$ by (\ref{pTauDef}), whether the roots are distinct or not.

\begin{remark}
\label{cm.remark}The second-order equations in (\ref{secondDeriv}) are the
equations of motion for the \textbf{rational Calogero--Moser system}. (Take
$\omega=0$ and $g^{2}=-1/N$ in the notation of \cite[Eq. (3)]{Cal}.) It
follows that solutions to (\ref{theODE}) are special cases of solutions to the
rational Calogero--Moser system, in which the initial velocities are chosen to
satisfy (\ref{theODE}) at $\tau=0.$
\end{remark}

Remark \ref{cm.remark} (together with Conjecture \ref{add1.conj}) indicates a
novel connection between integrable systems and random matrix theory. The
negative value of $g^{2}$ in the remark means that $\tau$ is real and all the
$z_{j}$'s are initially real, the system is attractive and collisions can
occur---allowing the points to move off the real line.

For completeness, we supply a proof of Proposition \ref{ODEadd.prop}.

\begin{proof}
[Proof of Proposition \ref{ODEadd.prop}]The local holomorphic dependence of
the roots on $\tau$ is an elementary consequence of the holomorphic version of
the implicit function theorem, with the assumption that the roots of
$p_{\sigma}$ are distinct guaranteeing that $dp_{\sigma}/dz$ is nonzero at
each root.

If $z_{j}$ is a simple zero of a polynomial $p,$ an easy power-series argument
shows that
\[
\frac{p^{\prime\prime}(z_{j})}{p^{\prime}(z_{j})}=2\left.  \left(
\frac{p^{\prime}(z)}{p(z)}-\frac{1}{z-z_{j}}\right)  \right\vert _{z=z_{j}},
\]
from which we easily obtain%
\begin{equation}
\frac{p^{\prime\prime}(z_{j})}{p^{\prime}(z_{j})}=2\sum_{k\neq j}\frac
{1}{z_{j}-z_{k}}, \label{pPrimePrime}%
\end{equation}
where the sum is over all $k$ different from $j,$ with $j$ fixed. We may then
differentiate the identity $p_{\tau}^{N}(z_{j}(\tau))=0$ to obtain%
\begin{equation}
\frac{\partial p_{\tau}^{N}}{\partial\tau}(z_{j}(\tau))+(p_{\tau}^{N}%
)^{\prime}(z_{j}(\tau))\frac{dz_{j}}{d\tau}=0. \label{diffId}%
\end{equation}
Using (\ref{qTauPDE}), (\ref{diffId}) gives%
\[
\frac{dz_{j}}{d\tau}=-\frac{\partial p_{\tau}^{N}}{\partial\tau}(z_{j}%
(\tau))\frac{1}{(p_{\tau}^{N})^{\prime}(z_{j}(\tau))}=-\frac{1}{2N}%
\frac{(p_{\tau}^{N})^{\prime\prime}(z_{j}(\tau))}{(p_{\tau}^{N})^{\prime
}(z_{j}(\tau))}.
\]
Applying (\ref{pPrimePrime}) then gives (\ref{theODE}).

For the second derivative, we suppress the dependence of $z_{j}$ on $\tau$ and
we use (\ref{theODE}) to make a preliminary calculation for each pair $j$ and
$k$ with $j\neq k$:
\begin{equation}
\frac{d}{d\tau}(z_{j}-z_{k})=-\frac{1}{N}\sum_{l\neq j}\frac{1}{z_{j}-z_{l}%
}+\frac{1}{N}\sum_{l\neq k}\frac{1}{z_{k}-z_{l}}. \label{dDiff1}%
\end{equation}
We then split each sum over $l$ on the right-hand side of (\ref{dDiff1}) into
a sum over $l\notin\{j,k\}$ plus an additional term:%
\begin{align*}
\frac{d}{d\tau}(z_{j}-z_{k})  &  =-\frac{1}{N}\left(  \frac{1}{z_{j}-z_{k}%
}+\sum_{l\notin\{j,k\}}\frac{1}{z_{j}-z_{l}}\right) \\
&  +\frac{1}{N}\left(  \frac{1}{z_{k}-z_{j}}+\sum_{l\notin\{j,k\}}\frac
{1}{z_{k}-z_{l}}\right)  ,
\end{align*}
which simplifies to
\begin{equation}
\frac{d}{d\tau}(z_{j}-z_{k})=-\frac{2}{N}\frac{1}{z_{j}-z_{k}}+\frac{1}{N}%
\sum_{l\notin\{j,k\}}\frac{z_{j}-z_{k}}{(z_{j}-z_{l})(z_{k}-z_{l})}.
\label{dDiff2}%
\end{equation}

We then differentiate (\ref{theODE}) using (\ref{dDiff2}) to get%
\begin{align*}
&  \frac{d^{2}z_{j}}{d\tau^{2}}\\
&  =\frac{1}{N}\sum_{k\neq j}\frac{1}{(z_{j}-z_{k})^{2}}\left(  -\frac{2}%
{N}\frac{1}{z_{j}-z_{k}}+\frac{1}{N}\sum_{l\notin\{j,k\}}\frac{z_{j}-z_{k}%
}{(z_{j}-z_{l})(z_{k}-z_{l})}\right) \\
&  =-\frac{2}{N^{2}}\sum_{k\neq j}\frac{1}{(z_{j}-z_{k})^{3}}+\frac{1}{N^{2}%
}\sum_{\substack{k,l:\\(j,k,l)~\text{distinct}}}\frac{1}{(z_{j}-z_{k}%
)(z_{j}-z_{l})(z_{k}-z_{l})}.
\end{align*}
The last sum over $k$ and $l$ is zero because the range of the sum is
invariant under interchange of $k$ and $l,$ but the summand changes sign under
interchange of $k$ and $l,$ leaving us with the claimed result.
\end{proof}

We now explain how Proposition \ref{ODEadd.prop} should imply push-forward
result in Conjecture \ref{introConj.conj}.

\begin{idea}
\label{AdditiveMotion.idea}Suppose $p_{0}^{N}$ is a sequence of polynomials of
degree $N$ whose limiting root distribution is a sufficiently regular
probability measure $\mu_{0}$ with compact support. Then we expect that the
following results will hold.

\begin{enumerate}
\item \label{initVelocity.point}The initial velocity of the roots will be
approximately the Cauchy transform of $\mu_{0}$ evaluated at the initial root:%
\[
z_{j}^{\prime}(0)\approx-m_{0}(z_{j}(0)).
\]

\item \label{secondDeriv.point}The second derivative of the roots will be
small at any time $\tau$ for which the distribution of roots approximates a
smooth two-dimensional distribution in the plane:%
\[
z_{j}^{\prime\prime}(\tau)\approx0.
\]

\end{enumerate}

If these results hold, then as in (\ref{approxMotion}),
\[
z_{j}(\tau)\approx z_{j}(0)-\tau m_{0}(z_{j}(0)),
\]
in which case the push-forward result in Point \ref{push.point} of Conjecture
\ref{introConj.conj} will hold.
\end{idea}

We now give a nonrigorous argument for the preceding idea.

\begin{proof}
[Argument for Idea \ref{AdditiveMotion.idea}]Point \ref{initVelocity.point} is
obtained by approximating the sum over $k$ in (\ref{theODE}), with the factor
of $1/N,$ by an integral with respect to $\mu_{0}.$ Meanwhile, the right-hand
side of (\ref{secondDeriv}) is formally of order $1/N,$ since it is a sum over
$N-1$ values but we are dividing by $N^{2}.$ In reality, the sum is not as
small as this naive calculation would suggest, because $1/z^{3}$ is not a
locally integrable function on the plane. Thus, there will be a substantial
contribution in (\ref{secondDeriv}) from the values of $z_{k}$ closest to
$z_{j}.$ But as long as these nearest neighbors are of order $1/\sqrt{N}%
$---which we expect if the distribution of roots is two-dimensional---we still
expect that the right-hand side of (\ref{secondDeriv}) will be of order
$1/\sqrt{N}.$ It is difficult to prove anything like this rigorously, in part
because it is difficult to control the nearest-neighbor spacings at time
$\tau,$ even if these spacings behave nicely at time zero.
\end{proof}

\subsection{Solving the equations using the Calogero--Moser systems}

In this subsection, we explain how the theory of Calogero--Moser systems can
be used to give an \textquotedblleft explicit\textquotedblright\ formula for
the roots of a heat-evolved polynomial. Of course, the formula still requires
a calculation: finding the eigenvalues of a matrix. Nevertheless, it is
notable that we can find the roots of the heat-evolved polynomial, for any one
fixed time $\tau,$ directly in a comparatively simple way.

Let $p_{0}^{N}$ be a polynomial of degree $N$ with roots $z_{1},\ldots,z_{N},$
listed with their multiplicities. Let $X_{0}$ be the diagonal matrix with the
$z_{j}$'s on the diagonal. Let $Y$ with diagonal entries given by%
\begin{equation}
Y_{jj}=\frac{1}{N}\sum_{k\neq j}\frac{1}{z_{j}-z_{k}} \label{Y1}%
\end{equation}
and off-diagonal entries given by%
\begin{equation}
Y_{jk}=\frac{1}{N}\frac{1}{z_{j}-z_{k}},\quad j\neq k. \label{Y2}%
\end{equation}
(Note that (\ref{Y1}) has a sum but (\ref{Y2}) does not.)

\begin{proposition}
\label{CM.prop}For any polynomial $p_{0}^{N}$ of degree $N,$ let $p_{\tau}%
^{N}$ be the heat-evolved polynomial, as in (\ref{pTauDef}). Then for all
$\tau\in\mathbb{C},$ the roots of $p_{\tau}^{N}$ are the eigenvalues of the
matrix%
\[
X_{0}-\tau Y.
\]
Specifically, if $p_{0}^{N}$ is assumed to be monic---so that $p_{\tau}^{N}$
is also monic---then the characteristic polynomial of $X_{0}-\tau Y$ is equal
to $p_{\tau}^{N}.$
\end{proposition}

\begin{proof}
The claimed result is a direct application of the formulas for the
Calogero--Moser system (\ref{secondDeriv}), with the initial velocities given
by the first-order system in (\ref{theODE}). See, for example, Section 2.7,
just above the remark on p. 18, of the monograph \cite{Etingof} by Etingof
(with a different scaling).
\end{proof}

We now discuss how Proposition \ref{CM.prop} could lead to the expected
large-$N$ behavior of the roots of $p_{\tau}^{N}.$ We would expect from
(\ref{Y1}) that the diagonal entries $Y_{jj}$ of $Y$ can be approximated for
large $N$ by the initial Cauchy transform $m_{0}$:%
\begin{equation}
Y_{jj}\approx m_{0}(z_{j}). \label{Yjj}%
\end{equation}
In particular, the diagonal entries of $Y$---and therefore of $X_{0}-\tau
Y$---should be of order 1 as $N$ tends to infinity. By contrast, the
off-diagonal entries of $Y$---and therefore the off-diagonal entries of
$X_{0}-\tau Y$---should be small. Indeed, these entries are typically of order
$1/N,$ with some small fraction of them being larger (order $1/\sqrt{N}$ if
the nearest-neighbor spacings are of order $1/\sqrt{N}$).

We then might plausibly hope that the eigenvalues of $X_{0}-\tau Y,$
especially if $\left\vert \tau\right\vert $ is small, might be well
approximated simply by the diagonal entries of $X_{0}-\tau Y.$ In that case,
Proposition \ref{CM.prop}, together with (\ref{Yjj}), would tell us that%
\[
z_{j}(\tau)\approx z_{j}-\tau m_{0}(z_{j}),
\]
which is consistent with Idea \ref{AdditiveMotion.idea} and with Point
\ref{push.point} of Conjecture \ref{introConj.conj}. We emphasize, however,
that we do not currently know how to make this line of reasoning rigorous. In
particular, the simplest estimate for the eigenvalues of almost-diagonal
matrices, namely the Gershgorin circle theorem (e.g., \cite[Theorem
6.1.1]{HornJohnson}), is not sharp enough to be useful here. Specifically, the
estimate in Gershgorin's theorem involves the sum of the absolute values of
the off-diagonal entries of the matrix over the rows of the matrix. Such sums
are of at least of order 1 in our setting.

Nevertheless, Proposition \ref{CM.prop} gives an intriguing alternative way of
attacking the problem. If nothing else, the proposition seems to give a
numerically efficient way to compute the roots of the heat-evolved polynomial
$p_{\tau}^{N}$---for one fixed $\tau$---at least in the regime where
Conjecture \ref{introConj.conj} holds. Proposition \ref{CM.prop} can, for
example, accurately replicate Figure \ref{evolvess.fig} in the
\textquotedblleft semicircular to circular\textquotedblright\ case but with
much less computational effort than numerically solving the system of ODEs in
(\ref{theODE}).

\subsection{Comparison to first-order Coulomb systems\label{coulomb.sec}}

It is instructive to compare the system of equations in (\ref{theODE}) to
first-order Coulomb systems in two dimensions, such as Eqs. (1.9) or (1.10) in
the ICM lecture of Serfaty \cite{Serf}. These equations read as, respectively,%
\begin{equation}
\frac{dz_{j}}{d\tau}=\frac{1}{N}\sum_{k\neq j}\frac{1}{\bar{z}_{j}-\bar{z}%
_{k}}, \label{coulomb1}%
\end{equation}
which is the ordinary gradient flow for the Coulomb energy of charged
particles in the plane, and%
\begin{equation}
\frac{dz_{j}}{d\tau}=\frac{i}{N}\sum_{k\neq j}\frac{1}{\bar{z}_{j}-\bar{z}%
_{k}}, \label{coulomb2}%
\end{equation}
which is the rotated gradient flow for the Coulomb energy. (One can also study
the \textit{second}-order Newtonian dynamics of charged particles in the
plane, as in Eq. (1.11) in \cite{Serf}.) Equation (\ref{coulomb1}) also
describes the dynamics of Ginzburg--Landau vortices, as in Eqs. (2.46) and
(2.47) in \cite{E}. Equations (\ref{coulomb1}) and (\ref{coulomb2}) are very
similar to (\ref{theODE}), except for the conjugates on $z_{j}$ and $z_{k}$ on
the right-hand sides.

One might then suppose that the methods used to rigorously analyze the
large-$N$ behavior of the systems (\ref{coulomb1}) and (\ref{coulomb2}%
)---described in \cite{Serf}---could be adapted to analyze (\ref{theODE}) as
well. This idea, however, turns out not to be correct. In an important
technical sense, (\ref{theODE}) is different from (\ref{coulomb1}) and
(\ref{coulomb2}), namely that the system (\ref{theODE}) is attractive in the
$x$-direction and repulsive in the $y$-direction. That is, two nearby points
$z_{j}$ and $z_{k}$ will attract each other if they are arranged horizontally
from each other and will repel each other if they are arranged vertically from
each other. This behavior is to contrasted with (\ref{coulomb1}), which is
repulsive in both directions (nearby points repel each other), and with the
(\ref{coulomb2}), which is neither attractive nor repulsive (nearby points
circle around each other). The mix of attractive and repulsive behavior in
(\ref{theODE}) means that the methods used to study (\ref{coulomb1}) and
(\ref{coulomb2}) cannot be used without substantial modifications.

\section{A PDE\ perspective\label{pde.sec}}

In this section, we offer arguments from a PDE\ perspective for Conjecture
\ref{introConj.conj} and for a multiplicative analog of that conjecture.

\subsection{The main argument\label{sect.PDE.additive}}

We consider a sequence $p^{N}$ of polynomials of degree $N$ whose empirical
root distribution converges to a compactly supported probability measure
$\mu_{0}.$ As in the statement of Conjecture \ref{introConj.conj}, we further
assume that the Cauchy transform $m_{0}$ of $\mu_{0}$ (as in Definition
\ref{PotentialCauchy.def}) is globally Lipschitz. We then let $L_{m_{0}}$
denote the Lipschitz constant of $m_{0}$:
\begin{equation}
L_{m_{0}}=\sup_{z_{1}\neq z_{2}}\left\vert \frac{m_{0}(z_{1})-m_{0}(z_{2}%
)}{z_{1}-z_{2}}\right\vert <\infty. \label{Lm0}%
\end{equation}
We now let $p_{\tau}^{N}$ denote the associated heat-evolved polynomials, as
in (\ref{pTauDef}). We now offer arguments for Points 1 and 2 of Conjecture
\ref{introConj.conj} from a PDE perspective.

\subsubsection{Point 1 of Conjecture \ref{introConj.conj}}

Point 1 of the conjecture---that the log potential of the limiting root
distribution satisfies the PDE (\ref{addPDEintro})---is supported by the
following proposition.

\begin{proposition}
\label{thm.log.pot.N} The logarithmic potential $S^{N}(z,\tau)=\frac{1}{N}%
\log|p^{N}(z,\tau)|^{2}$ satisfies the following PDE
\begin{equation}
\frac{\partial}{\partial\tau}S^{N}(z,\tau)=\frac{1}{2}\left(  \frac{\partial
S^{N}}{\partial z}\right)  ^{2}+\frac{1}{2N}\frac{\partial^{2}S^{N}}{\partial
z^{2}} \label{eq.finiteN.PDE}%
\end{equation}
whenever $z$ is not a root of $p^{N}(z,\tau)$. Alternatively, the second term
in \eqref{eq.finiteN.PDE} can be written in terms of the roots $\{z_{j}%
(\tau)\}_{j=1}^{N}$ of $p^{N}(z,\tau)$ so that
\[
\frac{\partial}{\partial\tau}S^{N}(z,\tau)=\frac{1}{2}\left(  \frac{\partial
S^{N}}{\partial z}\right)  ^{2}-\frac{1}{2N^{2}}\sum_{j=1}^{N}\frac
{1}{(z-z_{j}(\tau))^{2}}.
\]

\end{proposition}

\begin{proof}
Notice that $p^{N}(z,\tau)$ is holomorphic in $\tau$ and satisfies the
equation
\[
\frac{\partial}{\partial\tau}p^{N}(z,\tau)=\frac{1}{2N}\frac{\partial^{2}%
}{\partial z^{2}}p^{N}(z,\tau).
\]
Around $(z,\tau)$ where $p^{N}(z,\tau)\neq0$, $p^{N}$ has a branch of $\log$,
so that
\[
S^{N}=\frac{1}{N}\log p^{N}+\frac{1}{N}\overline{\log p^{N}}.
\]
Since $\log p^{N}$ is holomorphic in $z$ and $\tau$, $\overline{\log p^{N}}$
is anti-holomorphic in $z$ and $\tau$. The $\tau$-derivative of $S^{N}$ is
given by
\begin{align*}
\frac{\partial S^{N}}{\partial\tau}  &  =\frac{1}{2N^{2}}\frac{(\partial
^{2}/\partial z^{2})p^{N}(z,\tau)}{p^{N}(z,\tau)}\\
&  =\frac{1}{2N^{2}}\frac{(\partial^{2}/\partial z^{2})e^{NS^{N}}}{e^{NS^{N}}%
}\\
&  =\frac{1}{2N^{ 2}}\left[  N^{2}\left(  \frac{\partial S^{N}}{\partial
z}\right)  ^{2}+N \frac{\partial^{2}S^{N}}{\partial z^{2}}\right]  ,
\end{align*}
which simplifies to \eqref{eq.finiteN.PDE}.

Finally, by writing $p^{N}(z,\tau)$ into a product of linear factors, the
first and second $z$-derivatives of $S^{N}$ are given by
\[
\frac{\partial S^{N}}{\partial z}=\frac{1}{N}\frac{\partial/\partial
zp^{N}(z,\tau)}{p^{N}(z,\tau)}=\frac{1}{N}\sum_{j=1}^{N}\frac{1}{z-z_{j}%
(\tau)}%
\]
and
\[
\frac{\partial^{2}S^{N}}{\partial z^{2}}=-\frac{1}{N}\sum_{j=1}^{N}\frac
{1}{(z-z_{j}(\tau))^{2}}.
\]
This proves the last assertion.
\end{proof}

The PDE in \eqref{eq.finiteN.PDE} formally converges to the desired PDE
(\ref{addPDEintro}) as $N\rightarrow\infty$. We now look more carefully at the
question of whether the error term (the second term on the right-hand side of
\eqref{eq.finiteN.PDE}) is actually small for large $N$.

Recall that
\[
\frac{\partial^{2}S^{N}}{\partial z^{2}}=-\frac{1}{N}\sum_{i=1}^{N}\frac
{1}{(z-z_{i}(\tau))^{2}}.
\]
We expect the estimate
\[
\frac{\partial^{2}S^{N}}{\partial z^{2}}=-\frac{1}{N}\sum_{i=1}^{N}\frac
{1}{(z-z_{i}(\tau))^{2}}\approx-\text{P.V.}\int\frac{1}{(z-w)^{2}}d\mu_{\tau
}(w).
\]
Thus, the principal value integral should be finite as long as $|\tau|$ is
small enough so that $\mu_{\tau}$ remains absolutely continuous on
$\mathbb{C}$. If this is true, the term $\frac{1}{2N}\frac{\partial^{2}S^{N}%
}{\partial z^{2}}\rightarrow0$ as $N\rightarrow\infty$, and the limit of
$S^{N}$ in Theorem~\ref{thm.log.pot.N} becomes the solution of the desired PDE
(\ref{addPDEintro}).

Furthermore, the argument in the \textquotedblleft Details of Step
1\textquotedblright\ portion of Section \ref{details.sec} (based on Theorem
1.1 and Remark 1.1 in \cite{Li}) indicates that if the solution is $C^{1},$ it
will automatically by $C^{1,1},$ as claimed in Point 1 of Conjecture
\ref{introConj.conj}.

\subsubsection{Brief argument for Point 2 of Conjecture \ref{introConj.conj}%
\label{brief.sec}\label{sect.pushforward}}

In this section, we give a brief argument in three steps why we believe the
push-foward relation in Point 2 of Conjecture \ref{introConj.conj}. Briefly,
the argument is that \textit{Point 2 should follow from Point 1}, modulo
technical issues having to do with the regularity of the log potential. A
separate argument for Point 2 of the conjecture was provided in Section
\ref{dynamical.sec}; see Idea \ref{AdditiveMotion.idea}.

\textbf{Step 1:} We expect the PDE (\ref{addPDEintro}) has a unique solution
for small time; that is, the logarithmic potential $S$ of the limiting root
distribution in Conjecture \ref{introConj.conj} is the \textit{unique}
solution of the PDE (\ref{addPDEintro}) for small $|\tau|$. The PDE
(\ref{addPDEintro}) is of Hamilton--Jacobi type and (sufficiently regular)
solutions can be analyzed using the method of characteristics. The global
Lipschitz assumption on the Cauchy transform $m_{0}$ of $\mu_{0}$ will
guarantee that, for small enough $\tau$, each point in the plane can be
reached by a unique characteristic curve, giving uniqueness.

\textbf{Step 2:} The density $W$ of the limiting measure $\mu_{\tau}$ can be
calculated from the logarithmic potential $S$ by
\[
W=\frac{1}{\pi}\frac{\partial^{2}S}{\partial z\partial\bar{z}}.
\]
If $S$ is smooth enough, we may apply $\frac{1}{\pi}\frac{\partial}%
{\partial\bar{z}}$ and then $\frac{\partial}{\partial z}$ on both sides of the
PDE (\ref{addPDEintro}), giving
\[
\frac{\partial}{\partial\tau}\left(  \frac{1}{\pi}\frac{\partial^{2}%
S}{\partial z\partial\bar{z}}\right)  =\frac{\partial}{\partial z}\left[
\left(  \frac{\partial S}{\partial z}\right)  \cdot\frac{1}{\pi}\left(
\frac{\partial^{2}S}{\partial\bar{z}\partial z}\right)  \right]  .
\]
Hence, the density $W$ satisfies a (complex-time) continuity equation
\begin{equation}
\frac{\partial W}{\partial\tau}+\frac{\partial}{\partial z}\left[
-\frac{\partial S}{\partial z}W\right]  =0, \label{eq.continuity}%
\end{equation}
where we treat $S$ as a known quantity---the unique solution of the PDE
(\ref{addPDEintro}).

We can use general results of continuity equations by converting the
complex-time continuity equation into a real-time continuity equation.
Equation \eqref{eq.continuity} can be interpreted as saying that the measure
$\mu_{\tau}=W\,dx\,dy$ flows along the vector field $-\partial S/\partial z$.
(Here we interpret the complex number $-\partial S/\partial z$ as a vector in
the plane.) As long as $\partial S/\partial z$ is Lipschitz in $z$, general
results for continuity equations (e.g., \cite{AmbrosioCrippa}) will tell us
that the measure $\mu_{\tau}$ is the push-forward of $\mu_{0}$ by the map
obtained by flowing along $-\partial S/\partial z$.

\textbf{Step 3:} Step 2 in the argument says that $\mu_{\tau}$ is the
push-forward of $\mu_{0}$ under the flow of $-\partial S/\partial z$. We now
show that this map can be computed in a simple, explicit form as in Point 2 of
Conjecture \ref{introConj.conj}.

The Hamilton--Jacobi equation (\ref{addPDEintro}) can be analyzed using the
Hamilton's equations associated to the (complex-variable) Hamiltonian
\begin{equation}
H(z,p)=-\frac{1}{2}p^{2}, \label{eq.complex.Hamiltonian}%
\end{equation}
where $p$ is the (complex-valued) momentum variable. The Hamiltonian is
holomorphic in $z$ and $p$, and independent of $z$. While we are not aware of
a general theory on complex-time Hamilton--Jacobi equations, we can do an
analysis similar to \cite[Section 5]{HHmult} by converting the equation into a
real-time Hamilton--Jacobi equation. As we will see in Section
\ref{appendix.HJ}, the result will be Hamilton--Jacobi formulas similar to the
usual real-time formulas, as in the next paragraph.

The Hamilton's equations associated to the Hamiltonian
\eqref{eq.complex.Hamiltonian} are
\[
\frac{dz}{d\tau}=\frac{\partial H}{\partial p}=-p;\quad\frac{dp}{d\tau}%
=-\frac{\partial H}{\partial z}=0,
\]
where we look for solutions that are holomorphic functions of $\tau$. The
initial condition $z_{0}$ for $z(\tau)$ can be chosen arbitrarily on
$\mathbb{C}$, but the initial condition $p_{0}$ for $p(\tau)$ is chosen to be
\begin{equation}
p_{0}=\frac{\partial}{\partial z_{0}}S(z_{0},0)=m_{0}(z_{0}). \label{p0def}%
\end{equation}
The solution of the Hamilton's equations are
\begin{equation}
p(\tau)=p_{0};\quad z(\tau)=z_{0}-\tau p_{0}. \label{eq.char.curves}%
\end{equation}
The solutions \eqref{eq.char.curves} are called the \textbf{characteristic
curves} of the PDE (\ref{addPDEintro}). The first and second Hamilton--Jacobi
formulas then hold:
\begin{align}
S(z(\tau),\tau)  &  =S(z_{0},0)-\mathrm{Re}[\tau p_{0}^{2}]\label{eq.1st.HJ}\\
\frac{\partial S}{\partial z}(z(\tau),\tau)  &  =p(\tau). \label{eq.2nd.HJ}%
\end{align}
These two formulas are derived in Section \ref{appendix.HJ}, Proposition
\ref{prop.HJ.formulas}.

If we combine the Hamilton's equations and the second Hamilton--Jacobi
formula, we have
\[
\frac{dz}{d\tau}=-p(\tau)=-\frac{\partial S}{\partial z}(z(\tau),\tau).
\]
The right-hand side of the above equation is exactly the \textbf{vector field
occurring in} the continuity equation \eqref{eq.continuity}. Thus, the
integral curves for the vector field $-\partial S/\partial z$ are exactly the
characteristic curves of the Hamilton--Jacobi equation (\ref{addPDEintro}) .
It is notable that the integral curves are holomorphic in $\tau$.

The integral curves of $-\partial S/\partial z$ are the characteristic curves
$z_{0}-\tau p_{0}$, and $p_{0}$ is $m_{0}(z_{0})$; the map $T_{\tau}$ defined
in Point 2 of Conjecture \ref{introConj.conj} is just the characteristic
curves evaluated at $\tau$. Recall that the general results of the continuity
equation in Step 2 tells us that the density $W$ of the limiting root
distribution flows along the characteristic curves. This gives an argument for
Point 2 of Conjecture \ref{introConj.conj}.

\subsubsection{Details of Point 2 of Conjecture \ref{introConj.conj}%
\label{details.sec}}

In this section, we fill in the missing details in Section \ref{brief.sec}. We
also note certain places where the argument is not rigorous, for reasons
having to do with the regularity of the log potential $S.$

\textbf{Details of Step 1: }The PDE (\ref{addPDEintro}) is of Hamilton--Jacobi
type with complex time and space variables. In Section \ref{appendix.HJ}, we
transform the PDE into another Hamilton--Jacobi type PDE with a real time and
two real space variables. Now, under quite general conditions (e.g.,
\cite{CL}), Hamilton--Jacobi equations have unique viscosity solutions.
Furthermore, any $C^{1}$ solution is automatically a viscosity solution, so
there can be at most one $C^{1}$ solution, giving the desired uniqueness result.

We also note that by Theorem 1.1 and Remark 1.1 in \cite{Li}, one can
construct a $C^{1,1}$ solution to a Hamilton--Jacobi equation by the method of
characteristics, \textit{provided} that the initial data are $C^{1,1}$ and
that the flow along the characteristic curves is a bi-Lipschitz homeomorphism.
Thus, under these assumptions, any $C^{1}$ solution must be given by the
method of characteristics---and this $C^{1}$ solution will actually be
$C^{1,1}.$

We now show that, in fact, every point in the plane is hit by a unique
characteristic curve, provided that $\tau$ is small enough. We first note
that, in light of (\ref{p0def}) and (\ref{eq.char.curves}), the characteristic
curves can be computed in terms of the transport map as
\[
z(\tau)=T_{\tau}(z_{0}).
\]
We then establish nice behavior for $T_{\tau}$, for small $\tau.$

\begin{lemma}
\label{lem.Ttau.prop} For all $|\tau|<L_{m_{0}}^{-1}$ (where $L_{m_{0}}$ is
the Lipschitz constant of $m_{0}$), $T_{\tau}$ is a bi-Lipschitz homeomorphism
of $\mathbb{C}$ to $\mathbb{C}.$ Indeed, $T_{\tau}$ is uniformly bi-Lipschitz
for $\left\vert \tau\right\vert \leq c,$ for any $c<L_{m_{0}}.$
\end{lemma}

\begin{proof}
Suppose that $T_{\tau}(z_{1})-T_{\tau}(z_{2})=0$ for some $z_{1}\neq z_{2}$.
Then after an algebraic calculation, we get
\[
\frac{m_{0}(z_{1})-m_{0}(z_{2})}{z_{1}-z_{2}}=-\frac{1}{\tau},
\]
which is impossible if $\left\vert \tau\right\vert <L_{m_{0}}^{-1}.$ This
proves injectivity of $T_{\tau}.$ To prove surjectivity of $T_{\tau},$ we note
that $T_{\tau}(z)\approx z$ for large $|z|$. Thus, we can extend $T_{\tau}$ to
a continuous map on $\mathbb{S}^{2}$ by defining $T_{\tau}(\infty)=\infty$.
Suppose now that, on the contrary, there exists $z_{0}\in\mathbb{C}$ such that
the image $T_{\tau}(\mathbb{S}^{2})$ does not contain $z_{0}$. Then this
extended $T_{\tau}$ is a continuous map from $\mathbb{S}^{2}$ into
$\mathbb{S}\setminus\{z_{0}\}\cong\mathbb{R}^{2}$. By the Borsuk--Ulam
theorem, $T_{\tau}$ cannot be a injective map, contradicting the previously
established injectivity of $T_{\tau}.$ (Alternatively, surjectivity may be
proved using the fundamental group of the punctured plane, following a
well-known proof of the fundamental theorem of algebra. See, for example, the
proof of Theorem 56.1 in \cite{Munkres}.)

Finally, we address the claimed bi-Lipschitz property of $T_{\tau}.$ We
compute that
\[
\frac{T_{\tau}(z_{1})-T_{\tau}(z_{2})}{z_{1}-z_{2}}=1+\tau\frac{m_{0}%
(z_{1})-m_{0}(z_{2})}{z_{1}-z_{2}}.
\]
Thus, for any $c<L_{m_{0}}^{-1}$,
\[
0<1-cL_{m_{0}}\leq\left\vert \frac{T_{\tau}(z_{1})-T_{\tau}(z_{2})}%
{z_{1}-z_{2}}\right\vert \leq1+cL_{m_{0}}%
\]
if $|\tau|\leq c$.
\end{proof}

Before we move to the details of Step 2, we first define the time $\tau_{\max
}$ where the PDE and the transport map $T_{\tau}$ have nice behaviors.

\begin{notation}
Let $\tau_{\max}=\min\left(  C,L_{m_{0}}^{-1}\right)  $, where $C$ is the
constant in Conjecture \ref{introConj.conj} and $L_{m_{0}}$ is the Lipschitz
constant of $m_{0}$, as in (\ref{Lm0}).
\end{notation}

\textbf{Details of Step 2:} Step 2 of the argument in Section
\ref{sect.pushforward} concerns the deformation of the measure $\mu_{\tau}$.
Let $W(\cdot,\tau)$ be the density of $\mu_{\tau}$. If $S$ is $C^{3}$, then we
can take $\frac{1}{\pi}\partial^{2}/\partial z\partial\bar{z}$ on both sides
of (\ref{addPDEintro}) to get the continuity equation . We expect that if the
solution is only $C^{1,1},$ the resulting continuity equation will still hold
in the weak sense.

In the following result, which is proven in Section \ref{appendix.HJ}, we
transform the complex-time continuity equation \eqref{eq.continuity} into a
(real-time) continuity equation for $W$. Then in Step 3, we derive that
$\mu_{\tau}$ is the push-forward measure of $\mu_{0}$ by the map $T_{\tau}$.

\begin{proposition}
\label{prop.real.cont.eq} Suppose that $W$ satisfies the complex-time
continuity equation \eqref{eq.continuity}. Let
\[
W^{\tau}(z,t)=W(z,t\tau).
\]
Then $W^{\tau}$ satisfies the real-time continuity equation
\begin{equation}
\frac{\partial W^{\tau}}{\partial t}+\nabla\cdot(bW^{\tau})=0
\label{eq.cont.equation}%
\end{equation}
where $b$ is the vector field
\begin{equation}
b(z,\tau)=b(z,\tau)=-%
\begin{pmatrix}
\tau_{1} & -\tau_{2}\\
\tau_{2} & \tau_{1}%
\end{pmatrix}%
\begin{pmatrix}
\frac{1}{2}\frac{\partial S}{\partial x}\\
{-}\frac{1}{2}\frac{\partial S}{\partial y}%
\end{pmatrix}
=-\operatorname{vect}\left(  \tau\frac{\partial S}{\partial z}(z,\tau)\right)
\label{eq.vectorfield.appendix}%
\end{equation}
where the operator $\operatorname{vect}$ takes a complex number $x+iy$ to the
vector $(x,y)$ in $\mathbb{R}^{2}$.
\end{proposition}

By general results of the continuity equation (see, for example, Proposition
3.1 and Theorem 4.1 of \cite{AmbrosioCrippa}), if the initial condition of the
continuity equation is the density of a measure, then the solution of a
continuity equation is given by the push-forward of the measure by the flow of
the vector field. We will show in details of Step 3 that the ODE describing
the flow along the vector field (\ref{eq.vectorfield.appendix}) has a unique
solution for every initial point on $\mathbb{C}$. Theorem 4.1 of
\cite{AmbrosioCrippa} then guarantees us the solution of the continuity
equation is unique.

\begin{proof}
[Proof of Proposition \ref{prop.real.cont.eq}]Using the chain rule, we
compute
\[
\frac{\partial W^{\tau}}{\partial t}=2\operatorname{Re}\left[  \tau
\frac{\partial W}{\partial\tau}\right]  =\operatorname{Re}\left[
2\frac{\partial}{\partial z}\left(  \tau\frac{\partial S}{\partial z}W^{\tau
}\right)  \right]  .
\]
Expanding the right-hand side, we get
\[
\frac{\partial W^{\tau}}{\partial t}=\frac{\partial}{\partial x}\left[
\left(  \frac{\tau_{1}}{2}\frac{\partial S}{\partial x}+\frac{\tau_{2}}%
{2}\frac{\partial S}{\partial y}\right)  W^{\tau}\right]  +\frac{\partial
}{\partial y}\left[  \left(  \frac{\tau_{2}}{2}\frac{\partial S}{\partial
x}-\frac{\tau_{1}}{2}\frac{\partial S}{\partial y}\right)  W^{\tau}\right]  .
\]
This means we can write the above equation into the standard form of a
continuity equation
\[
\frac{\partial W^{\tau}}{\partial t}+\nabla\cdot(bW^{\tau})=0,
\]
where $b$ is the vector field in the statement of the proposition.
\end{proof}

\textbf{Details of Step 3:} We consider the real-time continuity equation in
(\ref{eq.cont.equation}) and we consider the integral curves of the associated
vector field $-\mathrm{vect}(\tau\partial S/\partial z).$ That is, we look for
solutions $w_{\tau}(t)$ to
\begin{equation}
\frac{dw_{\tau}(t)}{dt}=-\tau\frac{\partial S}{\partial z}(w_{\tau}(t),t\tau).
\label{ODEforw}%
\end{equation}
Now, assuming that the Hamilton--Jacobi formulas are applicable to the
(real-variables version of) the equation for $S,$ Proposition
\ref{prop.Lipschitz.dSdz} will tell us that the right-hand side of
(\ref{ODEforw}) will be Lipschitz. Thus, by Picard--Lindel\"{o}f theorem, the
ODE has a unique solutions. Then by Theorems 3.1 and 4.1 of
\cite{AmbrosioCrippa}, the continuity equation (\ref{eq.cont.equation}) has a
unique solution, given by push-forward along the curves in (\ref{ODEforw}).

We then solve (\ref{ODEforw}) by a real-variables version of the argument we
gave for Step 3 in Section \ref{brief.sec}. Specifically, we consider the
function $S^{\tau}(z,t)=S(z,t\tau).$ The proof of Proposition
\ref{prop.HJ.formulas} in Section \ref{appendix.HJ} will show that the curves
$z^{\tau}(t)=z_{0}-t\tau p_{0}$ and $p^{\tau}(t)=p_{0}$ satisfy%
\[
\frac{dz^{\tau}}{dt}=-\tau p^{\tau}(t)=-\tau\frac{\partial S^{\tau}}{\partial
z}(z^{\tau}(t),t)=-\tau\frac{\partial S}{\partial z}(z^{\tau}(t),t\tau).
\]
Thus, the curves $t\mapsto z^{\tau}(t)$ are the unique solutions of
(\ref{ODEforw}) with the initial condition $z_{0}.$ Taking $t=1,$ we conclude
that the measure $W(z,\tau)~dx~dy$ is the push-forward of $W(z,0)~dx~dy$ under
the map $z\mapsto z-\tau p_{0}=z-\tau m_{0}(z).$

\subsection{Auxiliary calculations\label{appendix.HJ}}

We will use the ordinary Hamilton--Jacobi method to prove that the solution of
the PDE (\ref{addPDEintro}) satisfies the Hamilton--Jacobi formulas
(\ref{eq.1st.HJ}) and (\ref{eq.2nd.HJ}). The proof requires the solution $S$
to be $C^{2},$ although we expect that the result will continue to hold if the
solution is only $C^{1,1}.$

\begin{proposition}
\label{prop.HJ.formulas} Suppose $S$ is a $C^{2}$ solution of
(\ref{addPDEintro}) for $\left\vert \tau\right\vert \leq C.$ Then the values
of $S$ and $\partial S/\partial z$ can be calculated using the characteristic
curves \eqref{eq.char.curves} as in (\ref{eq.1st.HJ}) and (\ref{eq.2nd.HJ}).
\end{proposition}

\begin{proof}
We first turn the PDE (\ref{addPDEintro}) into a PDE with real time, and apply
the first Hamilton--Jacobi formulas. (See, for example, Proposition 5.3 of
\cite{DHKBrown}.) For each $\left\vert \tau\right\vert \leq C$, define
\[
S^{\tau}(z,t)=S(z,t\tau).
\]
The $z$-derivative of $S^{\tau}$ and $S$ coincides with the same $(z,\tau)$,
so that
\begin{align*}
\frac{\partial S^{\tau}}{\partial t}  &  =\frac{\partial S}{\partial\tau}%
\frac{\partial t\tau}{\partial t}+\frac{\partial S}{\partial\bar{\tau}}%
\frac{\partial t\bar{\tau}}{\partial t}\\
&  =2\mathrm{Re}\left[  \frac{\tau}{2}\left(  \frac{\partial S^{\tau}%
}{\partial z}\right)  ^{2}\right] \\
&  =\frac{\operatorname{Re}\tau}{4}\left[  \left(  \frac{\partial S^{\tau}%
}{\partial x}\right)  ^{2}-\left(  \frac{\partial S^{\tau}}{\partial
y}\right)  ^{2}\right]  +\frac{\operatorname{Im}\tau}{2}\frac{\partial
S^{\tau}}{\partial x}\frac{\partial S^{\tau}}{\partial y},
\end{align*}
where we have written $z$ into real variables by $z=x+iy$. Thus, $S^{\tau
}(z,t)$ satisfies a Hamilton--Jacobi PDE, with Hamiltonian
\[
H^{\tau}(x,y,p_{x},p_{y})=-\frac{1}{4}\mathrm{Re}[\tau(p_{x}-ip_{y})^{2}].
\]

The Hamilton's equations for this Hamiltonian are given as
\[
\dot{x}=\frac{\partial H^{\tau}}{\partial p_{x}};\;\dot{y}=\frac{\partial
H^{\tau}}{\partial p_{y}};\;\dot{p}_{x}=-\frac{\partial H^{\tau}}{\partial
x};\;\dot{p}_{y}=-\frac{\partial H^{\tau}}{\partial y}.
\]
Once the initial condition $z_{0}=x_{0}+iy_{0}$ is chosen, the initial
conditions $p_{x,0}$ and $p_{y,0}$ of the Hamilton's equations are taken to
be
\[
p_{x,0}=\frac{\partial S^{\tau}}{\partial x}(z_{0},0);\quad p_{y,0}%
=\frac{\partial S^{\tau}}{\partial y}(z_{0},0).
\]
We then easily obtain
\begin{align*}
&  p_{x}(t)=p_{x,0};\quad p_{y}(t)=p_{y,0}\\
&  x(t)=x_{0}-\frac{t}{2}\mathrm{Re}[\tau(p_{x,0}-ip_{y,0})]\\
&  y(t)=y_{0}-\frac{t}{2}\mathrm{Im}[\tau(p_{x,0}-ip_{y,0})].
\end{align*}

We then rewrite our results using the complex-valued functions $z^{\tau}$ and
$p^{\tau}$ given by
\begin{align*}
z^{\tau}(t)  &  =x(t)+iy(t)=z_{0}-t\tau p_{0}\\
p^{\tau}(t)  &  =\frac{1}{2}(p_{x}(t)-ip_{y}(t)).
\end{align*}
We can now apply Eqs. (5.11) and (5.12) of \cite{DHKBrown} to get the
formulas
\begin{align}
S^{\tau}(z^{\tau}(t),t)  &  =S^{\tau}(z_{0},0)-\mathrm{Re}\left[  \tau\left(
\frac{1}{2}(p_{x_{0}}-ip_{y,0})\right)  ^{2}\right]  t\nonumber\\
&  =S^{\tau}(z_{0},0)-t\mathrm{Re}[\tau p_{0}^{2}] \label{1HJ}%
\end{align}
and%
\begin{equation}
\frac{\partial S^{\tau}}{\partial z}(z^{\tau}(t),t)=p^{\tau}(t). \label{2HJ}%
\end{equation}
Evaluating (\ref{1HJ}) and (\ref{2HJ}) at $t=1$ gives the claimed formulas.
\end{proof}

\begin{proposition}
\label{prop.Lipschitz.dSdz} Assume that the conclusion of Proposition
\ref{prop.HJ.formulas} holds; that is, the characteristic curves
\eqref{eq.char.curves} describe the values of $S$ and $\partial S/\partial z$
by the formulas \eqref{eq.1st.HJ} and \eqref{eq.2nd.HJ}. Then given any
$c<L_{m_{0}}^{-1}$, for any $\left\vert \tau\right\vert \leq c$, $\partial
S/\partial z(\cdot,\tau)$ is uniformly Lipschitz.
\end{proposition}

\begin{proof}
By the second Hamilton--Jacobi formula \eqref{eq.2nd.HJ}, the global Lipschitz
assumption on $m_{0}$, and Lemma \ref{lem.Ttau.prop}, we have
\begin{align*}
\left\vert \frac{\partial S}{\partial z}(z_{1},\tau)-\frac{\partial
S}{\partial z}(z_{1},\tau)\right\vert  &  =|m_{0}(T_{\tau}^{-1}(z_{1}%
))-m_{0}(T_{\tau}^{-1}(z_{2}))|\\
&  \leq L_{m_{0}}|T_{\tau}^{-1}(z_{1})-T_{\tau}^{-1}(z_{2})|\\
&  \leq L_{m_{0}}(1+cL_{m_{0}})\left\vert z_{1}-z_{2}\right\vert
\end{align*}
for all $\left\vert \tau\right\vert \leq c<L_{m_{0}}^{-1}$.
\end{proof}

\section{Evolution of the holomorphic moments\label{moments.sec}}

If $\mu$ is a compactly supported probability measure on the plane, we let
$M_{a}$ be the $a$-th \textbf{holomorphic moment} of $\mu,$ given as%
\[
M_{a}=\int_{\mathbb{C}}z^{a}~d\mu(z),\quad a=0,1,2,\ldots,
\]
so that $M_{0}=1.$ We emphasize that these moments do not uniquely determine
the measure. (Every rotationally invariant choice of $\mu,$ for example, gives
$M_{a}=0$ for all $a\geq1.$) Nevertheless, the holomorphic moments contain a
lot of information about the measure.

Now let $p_{0}^{N}$ be a polynomial of degree $N$ and let $p_{\tau}^{N}$ be
the heat evolution of $p_{0}^{N}$, as in (\ref{heatDef}). We then let
$M_{a,N}(\tau)$ be the $a$-th holomorphic moment of the empirical root measure
of $p_{\tau}^{N},$ that is,%
\begin{equation}
M_{a,N}(\tau)=\frac{1}{N}\sum_{j=1}^{N}(z_{j,N}(\tau))^{a}, \label{Man}%
\end{equation}
where $\{z_{j,N}(\tau)\}_{j=1}^{N}$ are the roots of $p_{\tau}^{N}.$

The following theorem gives mechanism for computing these moments inductively
as a function of $a=0,1,\ldots$, allows us to compute the large-$N$ limit of
the moments, and shows that the large-$N$ moments are consistent with the
prediction of the PDE\ in Point \ref{pde.point} in Conjecture
\ref{introConj.conj}. We may therefore say that Point \ref{pde.point} of
Conjecture \ref{introConj.conj} holds \textquotedblleft at the level of the
holomorphic moments.\textquotedblright\ 

\begin{theorem}
\label{holoMoment.thm}We have the following results.

\begin{enumerate}
\item For any polynomial $p_{0}^{N}$ of degree $N,$ the moments $M_{a,N}%
(\tau)$ in (\ref{Man}) satisfy%
\begin{equation}
\frac{dM_{0,N}}{d\tau}=\frac{dM_{1,N}}{d\tau}=0 \label{firstTwoMoments}%
\end{equation}
and for $a\geq2,$%
\begin{equation}
\frac{dM_{a,N}}{d\tau}=\frac{1}{2N}a(a-1)M_{a-2,N}(\tau)-\frac{a}{2}\sum
_{b=0}^{a-2}M_{b,N}(\tau)M_{a-b-2,N}(\tau). \label{dMdTauN}%
\end{equation}

\item Suppose now that $p_{0}^{N}$ is a sequence of polynomials of degree $N$
such that the limit%
\[
M_{a}(0):=\lim_{N\rightarrow\infty}M_{a,N}(0)
\]
exists for all $a.$ Then the limit%
\[
M_{a}(\tau):=\lim_{N\rightarrow\infty}M_{a,N}(\tau)
\]
exists for all $\tau$ and these limiting moments satisfy the limiting version
of (\ref{dMdTauN}), namely%
\begin{equation}
\frac{dM_{a}}{d\tau}=-\frac{a}{2}\sum_{b=0}^{a-2}M_{b}(\tau)M_{a-b-2}(\tau).
\label{dMdTau}%
\end{equation}

\item Suppose $\mu_{\tau}$ is a family of compactly supported probability
measures whose log potential $S(z,\tau)$ satisfies the PDE (\ref{addPDEintro})
for $z$ near infinity and $\tau$ in some open set $U\subset\mathbb{C}.$ Let
$\hat{M}_{a}(\tau)$ be the $a$-th holomorphic moment of $\mu_{\tau}.$ Then
these moments also satisfy (\ref{dMdTau}):
\begin{equation}
\frac{d\hat{M}_{a}}{d\tau}=-\frac{a}{2}\sum_{b=0}^{a-2}\hat{M}_{b}(\tau
)\hat{M}_{a-b-2}(\tau) \label{Mhat}%
\end{equation}
for $\tau\in U.$
\end{enumerate}
\end{theorem}

Although we will establish (\ref{dMdTauN}) using the PDE\ methods from Section
\ref{pde.sec}, it is also possible to verify the result using the ODE system
(\ref{theODE}) in Section \ref{dynAdd.sec}.

\begin{proof}
By (\ref{eq.finiteN.PDE}), the Cauchy transform $m^{N}(z,\tau)=\frac{\partial
S^{N}}{\partial z}(z,\tau)$ of the empirical root distribution of $p_{\tau
}^{N}$ satisfies the PDE%
\begin{equation}
\frac{\partial m^{N}}{\partial\tau}=\frac{\partial}{\partial z}\left[
\frac{1}{2}(m^{N})^{2}+\frac{1}{2N}\frac{\partial m^{N}}{\partial z}\right]  .
\label{MNpde}%
\end{equation}
Meanwhile, the Cauchy transform $m$ of any compactly supported probability
measure $\mu$ can be computed near infinity as%
\begin{align*}
m(z)  &  :=\int_{\mathbb{C}}\frac{1}{z-w}~d\mu(w)\\
&  =\frac{1}{z}\int_{\mathbb{C}}\left(  1+\frac{w}{z}+\frac{w^{2}}{z^{2}%
}+\cdots\right)  ~d\mu(w)\\
&  =\frac{1}{z}\left(  1+\frac{a_{1}}{z}+\frac{a_{2}}{z^{2}}+\cdots\right)  ,
\end{align*}
where $\{a_{j}\}_{j=0}^{\infty}$ are the holomorphic moments of $\mu.$

We then have%
\begin{equation}
\frac{\partial}{\partial\tau}m^{N}(z,\tau)=\frac{1}{z}\left(  \frac
{a_{1,N}^{\prime}(\tau)}{z}+\frac{a_{2,N}^{\prime}(\tau)}{z^{2}}+\frac
{a_{3,N}^{\prime}(\tau)}{z^{3}}+\cdots\right)  \label{dMNtauLHS}%
\end{equation}
and, after a short calculation,%
\begin{align}
&  \frac{\partial}{\partial z}\left[  \frac{1}{2}(m^{N})^{2}+\frac{1}{2N}%
\frac{\partial m^{N}}{\partial z}\right] \nonumber\\
&  =\sum_{j=2}^{\infty}\frac{1}{z^{j+1}}\left(  \frac{1}{2N}j(j-1)a_{j-2,N}%
(\tau)-\frac{j}{2}\sum_{k=0}^{j-2}a_{j,N}(\tau)a_{j-k,N}(\tau)\right)  .
\label{dMNtauRHS}%
\end{align}
Equating coefficients between (\ref{dMNtauLHS}) and (\ref{dMNtauRHS}) gives
the $dM_{1,N}/d\tau=0$ (for the coefficient of $1/z^{2}$) and gives
(\ref{dMdTauN}) (for the coefficient of $1/z^{j+1},$ $j\geq2$).

Meanwhile, we can solve for $M_{a,N}(\tau)$ inductively in $a,$ simply by
integrating (\ref{firstTwoMoments}) or (\ref{dMdTauN}) with respect to $\tau,$
with a fixed value of $M_{a,N}(0).$ The resulting expression will depend
continuously on the parameter $C=1/N,$ up to $C=0.$ If we integrate, let
$C\rightarrow0,$ and then differentiate in $\tau,$ we obtain (\ref{dMdTau}).

Finally, if the log potential of $\mu_{\tau}$ satisfies (\ref{addPDEintro})
near infinity, then the Cauchy transform of $\mu_{\tau}$ satisfies the PDE
(\ref{MNpde}), but without the $1/N$ term on the right-hand side. We may
therefore repeat the above calculation without the $1/N$ term to get
(\ref{Mhat}).
\end{proof}

\section{The multiplicative case\label{multmult.sec}}

In this section, we develop a \textquotedblleft
multiplicative\textquotedblright\ version of our main heat flow conjectures,
in a way that closely parallels results for the standard heat flow, which we
refer to as the \textquotedblleft additive\textquotedblright\ case. The term
\textquotedblleft multiplicative\textquotedblright\ refers to the associated
random matrix model, namely the Brownian motion in the general linear group,
which can be approximated by multiplying together a large number of
independent random matrices. (See (\ref{productApprox}).)

The multiplicative version of the heat flow we consider is essentially the one
considered by Tao in his blog post \cite{Tao2}. See Remarks
\ref{usVsTao.remark} and \ref{circleroots.remark} for a precise comparison.

\subsection{The multiplicative heat flow conjecture for random
matrices\label{multiplicativeConj.sec}}

We now define a \textquotedblleft multiplicative\textquotedblright\ version
$B_{s,\tau}^{N}$ of the elliptic random matrix model $Z_{s,\tau}^{N}.$ To do
this, let $X_{r}^{N}$ and $Y_{r}^{N}$ be independent Brownian motions in the
space of $N\times N$ Hermitian matrices, normalized so that $X_{1}^{N}$ and
$Y_{1}^{N}$ are GUEs. Then, imitating (\ref{ZNdef}), we define an
\textbf{elliptic Brownian motion} by
\[
Z_{s,\tau}^{N}(r)=e^{i\theta}(aX_{r}^{N}+ibY_{r}^{N}),
\]
where the parameters $a,$ $b,$ and $\theta$ are chosen to give the desired
values of $s$ and $\tau$ in (\ref{sAndTau}) at $r=1$. Then we introduce a
Brownian motion $B_{s,\tau}^{N}(r)$ as the solution of the following
stochastic differential equation%
\begin{align}
dB_{s,\tau}^{N}(r)  &  =B_{s,\tau}^{N}(r)\left(  i~dZ_{s,\tau}^{N}(r)-\frac
{1}{2}(s-\tau)~dr\right) \label{BstauSDE1}\\
B_{s,\tau}^{N}(0)  &  =I \label{BstauSDE2}%
\end{align}
Here, the $dr$ term on the right-hand side of (\ref{BstauSDE1}) is an It\^{o}
correction. The process $B_{s,\tau}^{N}(r)$ is a left-invariant Brownian
motion living in the general linear group $GL(N;\mathbb{C}).$ We typically
take $r=1,$ since $B_{s,\tau}^{N}(r)$ has the same distribution as
$B_{rs,r\tau}^{N}(1).$ We thus use the notation%
\[
B_{s,\tau}^{N}=\left.  B_{s,\tau}^{N}(r)\right\vert _{r=1}.
\]

The law of $B_{s,\tau}^{N}$ is the measure denoted as $\mu_{s,\tau}$ in
\cite{DHKcomplex}, where it serves to define norm on the target Hilbert space
in the complex-time Segal--Bargmann transform. When $\tau=0,$ the distribution
of $B_{s,0}^{N}$ is that of a Brownian motion in the unitary group at time
$s.$ We refer to $B_{s,\tau}^{N}$ as a \textquotedblleft
multiplicative\textquotedblright\ random matrix model because the solution to
(\ref{BstauSDE1}) and (\ref{BstauSDE2}) can be approximated by multiplying
together independent random matrices:%
\begin{equation}
B_{s,\tau}^{N}\approx\prod_{j=1}^{k}\left(  I+\frac{i}{\sqrt{k}}Z_{j}-\frac
{1}{2k}(s-\tau)I\right)  \label{productApprox}%
\end{equation}
for some large positive integer $k,$ where $Z_{1},\ldots,Z_{k}$ are
independent random matrices with the same distribution as $Z_{s,\tau}^{N}.$\ 

We let $b_{s,\tau}(r)$ be the \textquotedblleft free\textquotedblright%
\ version of $B_{s,\tau}^{N}(r),$ obtained by replacing $X_{r}^{N}$ and
$Y_{r}^{N}$ by their free counterparts and then solving the free version of
(\ref{BstauSDE1}) and (\ref{BstauSDE2}). We again take $r=1$ and use the
notation $b_{s,\tau}$ for $b_{s,\tau}(1).$ When $\tau=s,$ the process
$b_{s,s}$ is Biane's free multiplicative Brownian motion (denoted $\Lambda
_{s}$ in \cite[Section 4.2.1]{BianeJFA}). When $\tau=0,$ the process $b_{s,0}$
is Biane's free unitary Brownian motion \cite[Section 2.3]{BianeFields}. When
$\tau=t$ is real, Kemp \cite{KempLargeN} shows that the large-$N$ limit of
$B_{s,t}^{N},$ in the sense of $\ast$-distribution, is $b_{s,t}.$

In the special case $\tau=s,$ Driver, Hall, and Kemp \cite{DHKBrown}, building
on results of Hall and Kemp \cite{HKsupport}, compute the Brown measure of
$b_{s,s}$ using a novel PDE\ method. Ho and Zhong \cite{HZ} then compute the
Brown measure of $ub_{s,s},$ where $u$ is a unitary element freely independent
of $b_{s,s}.$ Hall and Ho \cite{HHmult} then compute the Brown measure of
$ub_{s,\tau}$ for general values of $\tau.$ In all cases, we believe that the
Brown measure of $ub_{s,\tau}$ coincides with the large-$N$ limit of the
empirical eigenvalue distribution of $U_{0}^{N}B_{s,\tau}^{N}$, where
$U_{0}^{N}$ is independent of $B_{s,\tau}^{N}$ and the limiting eigenvalue
distribution of $U_{0}^{N}$ equals the law of $u.$

The following conjecture describes what we believe about the Brown measure of
$ab_{s,\tau},$ where $a$ is freely independent of $b_{s,\tau}$ but not
necessarily unitary.

\begin{conjecture}
Fix $s>0$ and two complex numbers $\tau_{0}$ and $\tau$ with $\left\vert
\tau_{0}-s\right\vert <s$ and $\left\vert \tau-s\right\vert \leq s.$ Let
$\mu_{s,\tau_{0}}$ be the Brown measure of $ab_{s,\tau},$ where $a$ is
invertible and freely independent of $b_{s,\tau}$ and let $m_{s,\tau_{0}}$ be
its Cauchy transform. Then $m_{s,\tau_{0}}$ is Lipschitz continuous and the
Brown measure $\mu_{s,\tau}$ of $ab_{s,\tau}$ can be computed by the model
deformation formula%
\begin{equation}
\mu_{s,\tau}=(\Phi_{s,\tau_{0},\tau})_{\ast}(\mu_{s,\tau_{0}}),
\label{PushMult}%
\end{equation}
where the map $\Psi_{s,\tau_{0},\tau}:\mathbb{C}\rightarrow\mathbb{C}$ is
defined as%
\[
\Psi_{s,\tau_{0},\tau}(z)=z\exp\left\{  \frac{\tau-\tau_{0}}{2}\left(
2zm_{s,\tau_{0}}(z)-1\right)  \right\}  .
\]
Furthermore, as $\tau$ varies over the set $\left\vert \tau-s\right\vert <s$
(with $a$ and $s$ fixed), the log potential of $S_{s,\tau}$ is a $C^{1}$
solution of the PDE%
\begin{equation}
\frac{\partial S_{s,\tau}}{\partial\tau}=-\frac{1}{2}\left(  z^{2}\left(
\frac{\partial S}{\partial z}\right)  ^{2}-z\frac{\partial S}{\partial
z}\right)  . \label{PDEmult}%
\end{equation}

\end{conjecture}

As in the additive case, the preceding conjecture is not the main focus of the
present paper, but serves to connect Conjecture \ref{mult2.conj} below to the
general multiplicative heat flow conjecture (Conjecture
\ref{introConjMult.conj}).

\begin{remark}
If $a$ is unitary, the push-forward result in (\ref{PushMult}) is known from
Theorem 8.2 and Proposition 8.4 of \cite{HHmult}. In the unitary case, the PDE
(\ref{PDEmult}) holds except possibly on the boundary of the support of
$\mu_{s,\tau}.$ (By Proposition 6.2 and Theorem 7.5 in \cite{HHmult}, we can
set $\varepsilon=0$ in \cite[Theorem 4.2]{HHmult}, except possibly on the
boundary of the support.) Proving the conjecture for general $a$ is more
difficult than in the corresponding additive case.
\end{remark}

We now present the multiplicative version of Conjecture \ref{add1.conj}.

\begin{conjecture}
\label{mult2.conj}Fix $s>0$ and complex numbers $\tau_{0}$ and $\tau$ such
that $\left\vert \tau_{0}-s\right\vert \leq s$ and $\left\vert \tau
-s\right\vert \leq s.$ Let \thinspace$B_{s,\tau_{0}}^{N}$ and $B_{s,\tau}^{N}$
be Brownian motions as in (\ref{BstauSDE1}) and (\ref{BstauSDE2}). Suppose
$A^{N}$ a random matrix, independent of $B_{s,\tau}^{N},$ that is converging
almost sure in the sense of $\ast$-distribution to an element $a$ in a tracial
von Neumann algebra. Let $p_{s,\tau_{0}}$ be the random characteristic
polynomial of $A^{N}B_{s,\tau_{0}}^{N}$ and define a new random polynomial
$q_{s,\tau_{0},\tau}$ by%
\begin{equation}
q_{s,\tau_{0},\tau}(z)=\exp\left\{  -\frac{(\tau-\tau_{0})}{2N}\left(
z^{2}\frac{\partial^{2}}{\partial z^{2}}-(N-2)z\frac{\partial}{\partial
z}-N\right)  \right\}  p_{s,\tau_{0}}(z). \label{qstauDef}%
\end{equation}
Then the empirical root measure of $q_{s,\tau_{0},\tau}$ converges weakly
almost surely to the limiting eigenvalue distribution of $A^{N}B_{s,\tau}%
^{N}.$
\end{conjecture}

See Figure \ref{quad.fig}. See also Slide 9 in the supplemental document for
an animation in the multiplicative case.
\begin{figure}[ptb]%
\centering
\includegraphics[
height=2.7579in,
width=2.7466in
]%
{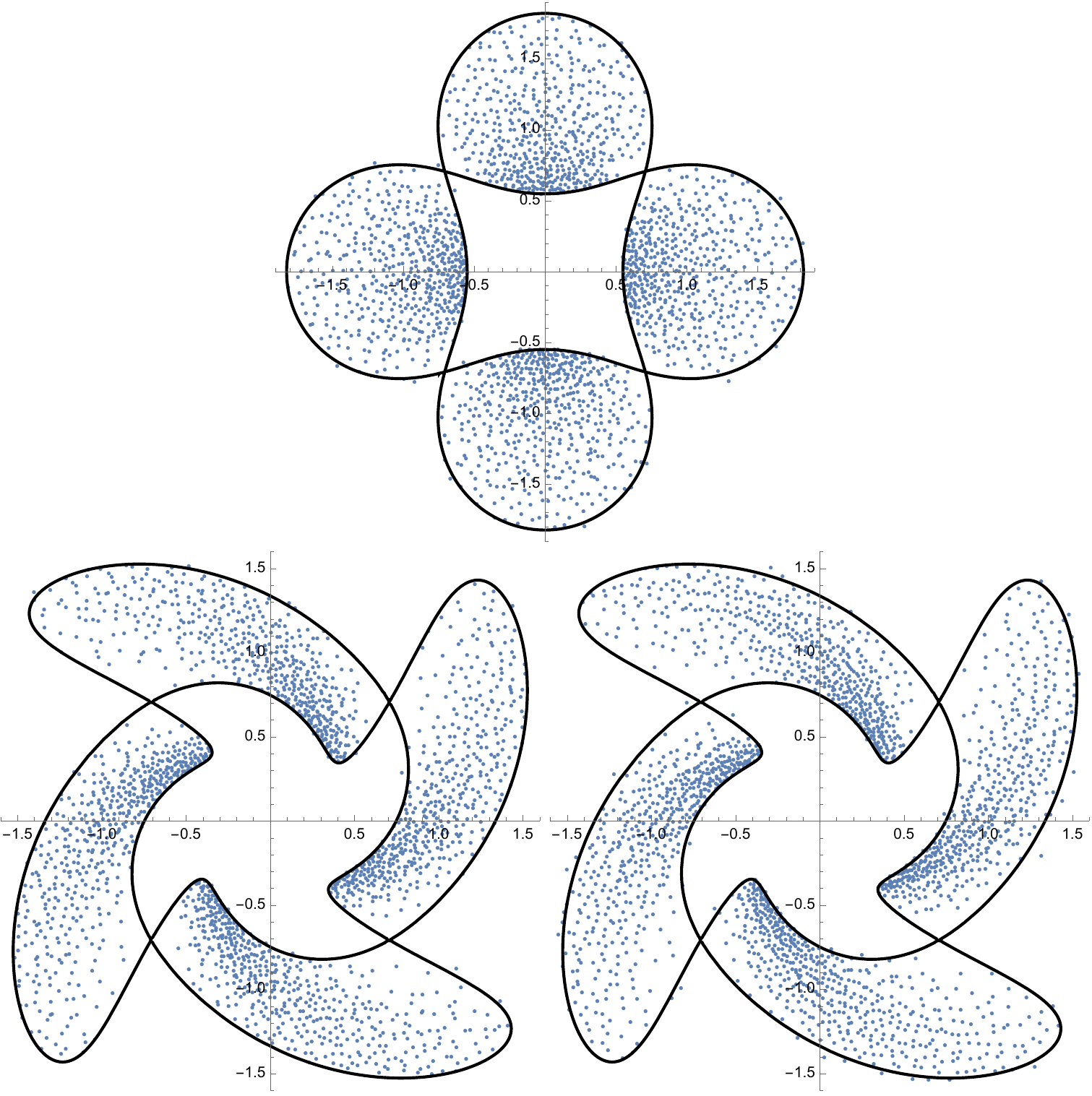}%
\caption{The $(s,\tau_{0})$ eigenvalues (top), the $(s,\tau)$ eigenvalues
(bottom left), and the $(\tau-\tau_{0})$-evolution of the $(s,\tau_{0})$
eigenvalues (bottom right), for the case that $A^{N}$ is unitary with
eigenvalues equally distributed among $\pm1$ and $\pm i.$ Shown for $s=1,$
$\tau_{0}=1,$ and $\tau=1+i.$}%
\label{quad.fig}%
\end{figure}

\begin{remark}
\label{usVsTao.remark}The right-hand side of (\ref{qstauDef}) involves the
heat flow%
\begin{equation}
\exp\left\{  -\frac{\tau}{2N}\left(  z^{2}\frac{\partial^{2}}{\partial z^{2}%
}-(N-2)z\frac{\partial}{\partial z}-N\right)  \right\}  , \label{multFlow}%
\end{equation}
evaluated with $\tau$ replaced by $\tau-\tau_{0}.$ The heat flow in
(\ref{multFlow}), meanwhile, is closely related to the one in Tao's blog post
\cite{Tao2}, with $g$ in Tao's notation identified with $N/2$ in our notation.
Suppose we evaluate the flow in (\ref{multFlow}) on a monomial and then
rescale the space variable by replacing $z$ with $e^{\frac{\tau}{2N}}z.$ After
this rescaling, we find that the monomial $z^{j}$ goes to
\[
e^{\frac{\tau N}{2}}\exp\left\{  -\frac{\tau}{2N}\left(  j^{2}-Nj\right)
\right\}  z^{j}.
\]
This agrees with the flow in \cite[Equation (2)]{Tao2}, up to a scaling of the
time variable and an overall time-dependent constant (independent of $j$)
which does not affect the roots. Note that we are scaling the space variable
by the constant $e^{\frac{\tau}{2N}},$ which approaches $1$ as $N\rightarrow
\infty.$ See also Remark \ref{circleroots.remark} in Section
\ref{multDynamics.sec}.
\end{remark}

We now define a multiplicative version $\tilde{D}$ of the second moment
function, given by
\[
\tilde{D}^{N}(s,\tau,z)=\mathbb{E}\{\left\vert \det(z-A_{0}^{N}B_{s,\tau}%
^{N})\right\vert ^{2}\}
\]

\begin{theorem}
[Deformation theorem for second moment]\label{deformationMult.thm}Suppose
$\tau_{0}$ and $\tau$ are complex numbers satisfying $\left\vert \tau
_{0}-s\right\vert \leq s$ and $\left\vert \tau-s\right\vert \leq s,$ in
accordance with (\ref{tauIneq}). Consider random matrices $A_{0}^{N}%
B_{s,\tau_{0}}^{N}$ and $A_{0}^{N}B_{s,\tau}^{N},$ respectively, where
$A_{0}^{N}$ is independent of $B_{s,\tau_{0}}^{N}$ and $B_{s,\tau}^{N}$. Then
the function $\tilde{D}^{N}$ in (\ref{DNdef}) can also be computed as%
\begin{align*}
&  \tilde{D}^{N}(s,\tau,z)\\
&  =\mathbb{E}\left\{  \left\vert \exp\left\{  -\frac{(\tau-\tau_{0})}%
{2N}\left(  z^{2}\frac{\partial^{2}}{\partial z^{2}}-(N-2)z\frac{\partial
}{\partial z}-N\right)  \right\}  \det(z-A_{0}^{N}B_{s,\tau_{0}}%
^{N})\right\vert ^{2}\right\}  .
\end{align*}

\end{theorem}

As in the additive case, we can rewrite this result as%
\[
\mathbb{E}\left\{  \left\vert \prod_{j=1}^{N}(z-z_{j}^{s,\tau})\right\vert
^{2}\right\}  =\mathbb{E}\left\{  \left\vert \prod_{j=1}^{N}(z-z_{j}%
^{s,\tau_{0}}(\tau))\right\vert ^{2}\right\}  ,
\]
where $\{z_{j}^{s,\tau}\}_{j=1}^{N}$ are the roots of the characteristic
polynomial of $A_{0}^{N}B_{s,\tau}^{N}$ and $\{z_{j}^{s,\tau_{0}}%
(\tau)\}_{j=1}^{N}$ are the roots of the characteristic polynomial of
$A_{0}^{N}B_{s,\tau_{0}}^{N},$ after evolving for time $\tau-\tau_{0}$ by the
multiplicative heat flow.

The proof of Theorem \ref{deformationMult.thm} is similar to the proof of
Theorem \ref{deformationAdditive.thm} in the additive case.

We briefly indicate the differences. We construct elements $X_{j}$ and $Y_{j}$
as in the additive case, but with $X_{j}$ taken to be skew-Hermitian, so that
$Y_{j}=iX_{j}$ is Hermitian. Then we define the associated left-invariant
vector fields by%
\[
\hat{X}_{j}f(Z)=\left.  \frac{d}{du}f(Ae^{uX_{j}})\right\vert _{u=0};\quad
\hat{Y}_{j}f(Z)=\left.  \frac{d}{du}f(Ae^{uY_{j}})\right\vert _{u=0}.
\]
Then we construct \textquotedblleft multiplicative\textquotedblright%
\ operators $\Delta_{\mathrm{m}},$ $\partial_{\mathrm{m}}^{2},$ and
$\bar{\partial}_{\mathrm{m}}^{2}$ by replacing $\tilde{X}_{j}$ and $\tilde
{Y}_{j}$ by $\hat{X}_{j}$ and $\hat{Y}_{j}$ in (\ref{threeOps}). Then the
density of the law of $B_{s,\tau}^{N}$ is a heat kernel measure with Laplacian
equal to $s\Delta_{\mathrm{m}}-\tau\partial_{\mathrm{m}}^{2}-\bar{\tau}%
\bar{\partial}_{\mathrm{m}}^{2}$ \cite[Lemma 5.9]{DHKcomplex}, and these three
operators commute \cite[Corollary 5.7]{DHKcomplex}.

The multiplicative version of the identity (\ref{diffExpect}) in Lemma
\ref{firstID.lem} then follows from the second proof of Lemma
\ref{firstID.lem}, using (the general version of) Proposition 4.7 in
\cite{DHKcomplex}. (A differential equation for the density of the law of
$B_{s,\tau}^{N}$---as in (\ref{gammaPDE}) in the additive case---is not needed
for the proof of Theorem \ref{deformationMult.thm}, as long (\ref{diffExpect}) holds.)

We then state and prove the multiplicative version of Lemma \ref{secondID.lem}%
. Once this result is established, the proof of Theorem
\ref{deformationMult.thm} proceeds as in the additive case.

\begin{lemma}
\label{secondIDmult.lem}Let $B$ be a variable ranging over $M_{N}(\mathbb{C}%
)$, let $z$ be a variable ranging over $\mathbb{C}$, and let $A$ be a fixed
matrix. Then%
\[
\partial_{\mathrm{m}}^{2}\det(zI-AB)=\frac{1}{N}\left\{  z^{2}\frac
{\partial^{2}}{\partial z^{2}}-z(N-2)z\frac{\partial}{\partial z}-N\right\}
\det(zI-AB),
\]
where $\partial_{\mathrm{m}}^{2}$ acts in the $B$ variable.
\end{lemma}

\begin{proof}
We use the notation
\begin{align*}
Q  &  =zI-AB\\
R  &  =Q^{-1}.
\end{align*}
The multiplicative version of the operators $Z_{j}$ satisfy the basic identity%
\[
Z_{j}B=BX_{j}.
\]
Then we compute%
\begin{align*}
Z_{j}\det(Q)  &  =-N\det(Q)\mathrm{tr}[RABX_{j}]\\
Z_{j}^{2}\det(Q)  &  =N^{2}\det(Q)\mathrm{tr}[RABX_{j}]\mathrm{tr}[RABX_{j}]\\
&  -N\det(Q)\mathrm{tr}[RABX_{j}RABX_{j}]\\
&  -N\det(Q)\mathrm{tr}[RABX_{j}^{2}]
\end{align*}
Using the magic formulas (\ref{magic1}) and (\ref{magic2}), but this time with
the sign reversed because the $X_{j}$'s and $Y_{j}$'s are skew Hermitian, we
get%
\begin{align*}
\partial_{\mathrm{m}}^{2}\det(Q)  &  =-\det(Q)\mathrm{tr}[RABRAB]+N\det
(Q)\mathrm{tr}[RAB]\mathrm{tr}[RAB]\\
&  +N\det(Q)\mathrm{tr}[RAB]
\end{align*}

If we write%
\[
AB=zI-(zI-AB)=zI-Q,
\]
we get%
\[
\mathrm{tr}[RAB]=z\mathrm{tr}[RAB]-1
\]
and%
\[
\mathrm{tr}[RABRAB]=1-2z\mathrm{tr}[R]+z^{2}\mathrm{tr}[R^{2}]
\]
and
\[
\mathrm{tr}[RAB]\mathrm{tr}[RAB]=1-2z\mathrm{tr}[R]+z^{2}\mathrm{tr}[R]^{2}.
\]
Thus,%
\begin{align*}
\partial_{\mathrm{m}}^{2}\det(Q)  &  =\det(Q)\{(N-1)-2z(N-1)\mathrm{tr}[R]\}\\
&  +\det(Q)z^{2}\{N\mathrm{tr}[R]^{2}-\mathrm{tr}[R^{2}]\}\\
&  +N\det(Q)(-1+z\mathrm{tr}[R]),
\end{align*}
which simplifies to%
\begin{align*}
\partial_{\mathrm{m}}^{2}\det(Q)  &  =\det(Q)\{-1-z(N-2)\mathrm{tr}[R]\}\\
&  +\det(Q)z^{2}\{N\mathrm{tr}[R]^{2}-\mathrm{tr}[R^{2}]\}.
\end{align*}

Using (\ref{dzDet}) and (\ref{dzSquaredDet}), we obtain the claimed result.
\end{proof}

\subsection{The dynamics of the roots\label{multDynamics.sec}}

We now describe the multiplicative version of the results of Section
\ref{dynAdd.sec}.

\begin{proposition}
\label{multODE.prop}Let $p_{0}^{N}$ be a polynomial of degree $N$ and define
$p_{\tau}^{N}$ by
\[
p_{\tau}^{N}(z)=\exp\left\{  -\frac{\tau}{2N}\left(  z^{2}\frac{\partial^{2}%
}{\partial z^{2}}-(N-2)z\frac{\partial}{\partial z}-N\right)  \right\}
p_{0}^{N}(z),
\]
where the exponential, as applied to $p_{0}^{N},$ is defined as a convergent
power series. Suppose that the zeros of $p_{\sigma}$ are nonzero and distinct
for some $\sigma\in\mathbb{C}.$ Then for all $\tau$ in a neighborhood of
$\sigma,$ it is possible to order the zeros of $p_{\tau}$ as $z_{1}%
(\tau),\ldots,z_{N}(\tau)$ so that each $z_{j}(\tau)$ is nonzero and depends
holomorphically on $\tau,$ and so that the collection $\{z_{j}(\tau
)\}_{j=1}^{N}$ satisfies the following system of holomorphic differential
equations:%
\begin{equation}
\frac{1}{z_{j}(\tau)}\frac{dz_{j}(\tau)}{d\tau}=\frac{1}{2N}\left[
1+\sum_{k\neq j}\frac{z_{j}(\tau)+z_{k}(\tau)}{z_{j}(\tau)-z_{k}(\tau
)}\right]  . \label{dzjMult}%
\end{equation}
Furthermore, if we write each $z_{j}(\tau)$ as $z_{j}=e^{iw_{j}(\tau)}$ (but
where we do not assume $w_{j}(\tau)$ is real), we have the second-derivative
formula%
\begin{equation}
\frac{d^{2}w_{j}}{d\tau^{2}}=-\frac{1}{4N^{2}}\sum_{k\neq j}\frac{\cos
((w_{j}-w_{k})/2)}{\sin^{3}((w_{j}-w_{k})/2)}. \label{CMtrig}%
\end{equation}

\end{proposition}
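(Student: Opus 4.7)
The proof will follow the same template as the proof of Proposition~\ref{ODEadd.prop}, so the plan is to organize it in parallel. First I observe that the operator
\[
D = z^2\frac{\partial^2}{\partial z^2} - (N-2)z\frac{\partial}{\partial z} - N
\]
is diagonalized by monomials, with $Dz^k = (k-N)(k+1)z^k$. In particular $D$ preserves polynomials of degree at most $N$ (and acts by $0$ on $z^N$, so the leading coefficient is preserved), hence the exponential in the definition of $q_\tau$ converges on such polynomials and $q_\tau$ solves the PDE $\partial q_\tau/\partial\tau = -(2N)^{-1}Dq_\tau$. Local holomorphic dependence of the roots then follows from the holomorphic implicit function theorem, since the assumption that the zeros of $q_\sigma$ are distinct forces $q'_\sigma(z_j(\sigma))\neq 0$, while being nonzero is an open condition inherited from $\sigma$ to a neighborhood of $\sigma$.

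The first-order ODE~(\ref{dzjMult}) is then obtained by differentiating the identity $q_\tau(z_j(\tau))=0$ in $\tau$, exactly as in the additive proof. Substituting the PDE and using $q_\tau(z_j)=0$ to eliminate the $-Nq_\tau$ term yields
\[
\frac{dz_j}{d\tau} = \frac{1}{2N}\left(z_j^2\frac{q''_\tau(z_j)}{q'_\tau(z_j)} - (N-2)z_j\right).
\]
Applying the root identity~(\ref{pPrimePrime}), dividing by $z_j$, and using the algebraic rewriting $2z_j/(z_j-z_k) = 1 + (z_j+z_k)/(z_j-z_k)$, the constants collapse to $(N-1)-(N-2)=1$, giving exactly~(\ref{dzjMult}).

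For the second-derivative formula, the plan is to introduce logarithmic coordinates $w_j$ via $z_j = e^{iw_j}$ and exploit the crucial identity
\[
\frac{z_j+z_k}{z_j-z_k} = -i\cot\!\left(\frac{w_j-w_k}{2}\right),
\]
which converts the rational differences in~(\ref{dzjMult}) into cotangents of angle differences. Differentiating once more, following the analog of steps~(\ref{dDiff1})--(\ref{dDiff2}) and using $(d/d\tau)\cot(x) = -\csc^2(x)\,x'$, produces the expected diagonal contribution $-(4N^2)^{-1}\cos((w_j-w_k)/2)/\sin^3((w_j-w_k)/2)$ plus a triple sum over distinct indices $(j,k,l)$. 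The main (and only genuinely nontrivial) obstacle is to show this triple sum vanishes. Applying the trigonometric identity $\cot A - \cot B = \sin(B-A)/(\sin A\sin B)$ with $A=(w_k-w_l)/2$ and $B=(w_j-w_l)/2$, the triple sum reduces, after the $1/\sin^2((w_j-w_k)/2)$ factor is absorbed, to a constant multiple of
\[
\sum_{\substack{k,l\\(j,k,l)\text{ distinct}}}\frac{1}{\sin((w_j-w_k)/2)\sin((w_k-w_l)/2)\sin((w_j-w_l)/2)},
\]
which is manifestly antisymmetric under $k\leftrightarrow l$ (the middle factor flips sign while the outer two are exchanged) and therefore vanishes. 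This is precisely the trigonometric analog of the antisymmetric cancellation used at the end of the proof of Proposition~\ref{ODEadd.prop}, and is what forces the $-1/(4N^2)$ prefactor to appear cleanly in~(\ref{CMtrig}).
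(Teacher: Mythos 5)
Your proposal is correct, and its skeleton matches the paper's: differentiate $q_\tau(z_j(\tau))=0$ against the PDE to get the first-order system, then differentiate once more, split off the $l\in\{j,k\}$ terms to produce a two-body (diagonal) contribution plus a triple sum over distinct $(j,k,l)$, and kill the triple sum by antisymmetry under $k\leftrightarrow l$. The differences are in execution rather than substance. For the first-order equation the paper simply says the verification is \textquotedblleft very similar\textquotedblright\ to the additive case and omits it; you supply the details, including the useful observation that the operator is diagonalized on monomials with eigenvalues $(k-N)(k+1)$ (so it annihilates $z^N$ and preserves degree and monicity), and your bookkeeping of the constant, $2z_j/(z_j-z_k)=1+(z_j+z_k)/(z_j-z_k)$ with $(N-1)-(N-2)=1$, is exactly right. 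For the second-derivative formula, the paper stays in the $z$-variables, differentiating $(z_j+z_k)/(z_j-z_k)$ by the quotient rule, cancelling the triple sum of the form $z_kz_l/((z_k-z_l)(z_j-z_k)(z_j-z_l))$ by antisymmetry, and only converting to trigonometric form at the very end via the identity $z_jz_k(z_j+z_k)/(z_j-z_k)^3=\tfrac{i}{4}\cos((w_j-w_k)/2)/\sin^3((w_j-w_k)/2)$; you instead pass to $w$-coordinates immediately, rewriting the ODE as $dw_j/d\tau=-\tfrac{i}{2N}-\tfrac{1}{2N}\sum_{k\neq j}\cot((w_j-w_k)/2)$ and using $\cot A-\cot B=\sin(B-A)/(\sin A\sin B)$, so that the same cancellation appears as antisymmetry of $1/\bigl(\sin((w_j-w_k)/2)\sin((w_k-w_l)/2)\sin((w_j-w_l)/2)\bigr)$. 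The early change of variables makes the constant $-i/2N$ drop out upon differentiation and keeps all identities trigonometric, at the cost of nothing; both routes are equally rigorous and of comparable length.
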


\begin{remark}
\label{circleroots.remark}The rescaled roots $\tilde{z}_{j}(t)=e^{-\frac
{t}{2N}}z_{j}(t)$ satisfy a system of differential equations identical to
(\ref{dzjMult}) except with out the \textquotedblleft$1$\textquotedblright%
\ term on the right-hand side of the equation. Now, if $z$ and $w$ are in the
unit circle, then $(z+w)/(z-w)$ is pure imaginary. Using this observation, and
interpreting the left-hand side of (\ref{dzjMult}) as the derivative of $\log
z_{j}(\tau),$ we can verify the following result: If $\tau_{0}=0$ and the
points $\{z_{j}(0)\}_{j=1}^{N}$ are all in the unit circle, then the points
$\tilde{z}_{j}(t)$ will be remain in the unit circle for $t\in\mathbb{R}$, for
as long as they remain distinct. In the case $t>0,$ however, we are interested
in going well past the time when the points collide.

The system of differential equations for the points $\tilde{z}_{j}(t)$ in the
previous paragraph is the same as the one in Tao's blog post \cite{Tao2}, up
to a scaling of the time variable.
\end{remark}

As for the corresponding result (\ref{secondDeriv}) in the additive case, we
expect that the right-hand side of (\ref{CMtrig}) will be small when $N$ is
large. Thus, we expect that the trajectories $w_{j}(\tau)$ will be
approximately linear in $\tau,$ at least for small $\tau.$

\begin{remark}
\label{cmMult.remark}The equation (\ref{CMtrig}) is the equation of motion for
the \textbf{trigonometric Calogero--Moser} system, introduced by Sullivan.
(Take $a=1/2$ and $g^{2}=-1/(2N^{2})$ in Eq. (9) of \cite{Cal}.)
\end{remark}

\begin{proof}
[Proof of Proposition \ref{multODE.prop}]The verification of (\ref{dzjMult})
is very similar to the verification of (\ref{theODE}) in Proposition
\ref{ODEadd.prop} and is omitted. For (\ref{CMtrig}), we first compute that%
\begin{equation}
\frac{d}{d\tau}\frac{z_{j}+z_{k}}{z_{j}-z_{k}}=2\frac{z_{j}\frac{dz_{k}}%
{d\tau}-z_{k}\frac{dz_{j}}{d\tau}}{(z_{j}-z_{k})^{2}}. \label{zPlusZminus}%
\end{equation}
Then since $w_{j}=\frac{1}{i}\log z_{j},$ we use (\ref{zPlusZminus}) and
(\ref{dzjMult}) to compute%
\begin{align}
\frac{d^{2}w_{j}}{d\tau^{2}}  &  =\frac{1}{i}\frac{d}{d\tau}\left(  \frac
{1}{z_{j}}\frac{dz_{j}}{d\tau}\right) \nonumber\\
&  =\frac{1}{2N^{2}}\sum_{k\neq j}\frac{1}{(z_{j}-z_{k})^{2}}z_{j}%
z_{k}\nonumber\\
&  \cdot\left(  \sum_{l\neq k}\frac{z_{k}+z_{l}}{z_{k}-z_{l}}-\sum_{l\neq
j}\frac{z_{j}+z_{l}}{z_{j}-z_{l}}\right)  . \label{ddLog}%
\end{align}
Then, as in the proof of Proposition \ref{ODEadd.prop}, we write each sum over
$l$ in (\ref{ddLog}) as a sum over $l\notin\{j,k\}$ plus one extra term,
giving%
\begin{align*}
\frac{d^{2}w_{j}}{d\tau^{2}}  &  =\frac{1}{2iN^{2}}\sum_{k\neq j}\frac
{1}{(z_{j}-z_{k})^{2}}z_{j}z_{k}\\
&  \cdot\left(  -2\frac{(z_{j}+z_{k})}{(z_{k}-z_{k})}+\sum_{l\notin%
\{j,k\}}\left(  \frac{z_{k}+z_{l}}{z_{k}-z_{l}}-\frac{z_{j}+z_{l}}{z_{j}%
-z_{l}}\right)  \right)  .
\end{align*}
This result simplifies to
\begin{align}
\frac{d^{2}w_{j}}{d\tau^{2}}  &  =-\frac{1}{iN^{2}}\sum_{k\neq j}\frac
{z_{j}z_{k}(z_{j}+z_{k})}{(z_{j}-z_{k})^{3}}\nonumber\\
&  +\frac{1}{2iN^{2}}z_{j}\sum_{\substack{k,l:~\\(j,k,l)\text{ distinct}%
}}\frac{z_{k}z_{l}}{(z_{k}-z_{l})(z_{j}-z_{k})(z_{j}-z_{l})}.
\label{secondDerivLog}%
\end{align}

Now, the second term on the right-hand side of (\ref{secondDerivLog}) is zero
because the summand changes sign under interchange of $k$ and $l$. For the
first term, we recall that $z_{j}=e^{ix_{j}}$ and compute that%
\begin{equation}
\frac{z_{j}z_{k}(z_{j}+z_{k})}{(z_{j}-z_{k})^{3}}=\frac{i}{4}\frac{\cos
((x_{j}-x_{k})/2)}{\sin^{3}((x_{j}-x_{k})/2)}. \label{zjID}%
\end{equation}
Dropping the second term on the right-hand side of (\ref{secondDerivLog}) and
using (\ref{zjID}) in the first term gives (\ref{CMtrig}).
\end{proof}

\subsection{The PDE perspective}

In the multiplicative case, we again consider a sequence of polynomial $p^{N}$
with distinct roots, whose empirical root distribution converges to a
compactly supported probability measure $\nu_{0}$ on $\mathbb{C}$ whose Cauchy
transform is globally Lipschitz.

In this section, we propose the conjecture of the heat evolution of $p^{N}$
using a PDE perspective. We call this the multiplicative case because the PDE
is the same PDE as the one derived from multiplicative Brownian motion. The
argument why we believe the conjecture holds is very similar to the additive
case in Section \ref{sect.PDE.additive}.

\begin{conjecture}
Consider the new polynomial defined by
\[
q^{N}(z,\tau)=\exp\left\{  -\frac{\tau}{2N}\left(  z^{2}\frac{\partial^{2}%
}{\partial z^{2}}-(N-2)z\frac{\partial}{\partial z}-N\right)  \right\}
p^{N}(z).
\]
Let $\nu_{\tau}^{N}$ be the empirical root distribution of $p^{N}(\cdot,\tau
)$. Then the following holds.

\begin{enumerate}
\item There exists a constant $C>0$ such that $\nu_{\tau}^{N}$ converges
weakly to a probability measure $\nu_{\tau}$ on $\mathbb{C}$ whose logarithmic
potential $S(z,\tau) = \int\log\vert z-w\vert^{2} d\nu_{\tau}(w)$ is a $C^{1}%
$-solution of the PDE
\begin{equation}
\label{eq.HJ.limit.mult}\frac{\partial S}{\partial\tau} =- \frac{1}{2}\left(
z^{2}\left(  \frac{\partial S}{\partial z}\right)  ^{2}-z\frac{\partial
S}{\partial z}\right)  .
\end{equation}
for $\vert\tau\vert\leq C$.

\item There exists $0<\tau_{\mathrm{max}}<C$ such that the probability measure
$\nu_{\tau}$ on $\mathbb{C}$ is the push-forward of $\nu_{0}$ by
\[
T_{\tau}^{\mathrm{mult}}(z)=z\exp\left(  \tau m_{0}(z)-\frac{1}{2}\right)
\]
whenever $|\tau|\leq\tau_{\mathrm{max}}$, where $m_{0}$ denotes the Cauchy
transform of $\nu_{0}$.
\end{enumerate}
\end{conjecture}

\subsection{The evolution of the holomorphic moments}

We now briefly indicate how the results of Section \ref{moments.sec} need to
be modified in the multiplicative case.

\begin{theorem}
\label{holoMult.thm}The holomorphic moments in the multiplicative case satisfy
Theorem \ref{holoMoment.thm}, with the following changes.
First,(\ref{firstTwoMoments}) is replaced by
\[
\frac{dM_{0,N}}{d\tau}=0;\quad\frac{dM_{1,N}}{d\tau}=\frac{1}{2}M_{1,N}.
\]
Second, (\ref{dMdTauN}) is replaced by%
\begin{equation}
\frac{dM_{a,N}}{d\tau}=\frac{a}{2}M_{a,N}+\frac{a}{2}\sum_{b=1}^{a-1}%
M_{b,N}^{{}}M_{a-b,N}-\frac{a}{2N}M_{a,N}. \label{MomMult2}%
\end{equation}
Third, (\ref{dMdTau}) and (\ref{Mhat}) are replaced by the formal large-$N$
limit of (\ref{MomMult2}) (dropping the $1/N$ term). Last, in Point 3 of the
theorem, the PDE (\ref{addPDEintro}) is replaced by (\ref{eq.HJ.limit.mult}).
\end{theorem}

Note that the right-hand side of (\ref{MomMult2}) involves the moment
$M_{a,N}$. But we can still easily integrate (\ref{MomMult2}) to obtain
$M_{a,N}$ by using an exponential integrating factor. The proof of Theorem
\ref{holoMult.thm} is very similar to the proof of Theorem
\ref{holoMoment.thm} and is omitted.

\subsection*{Acknowledgments}

The authors thank the referees for reading the manuscript carefully and making many helpful suggestions for improvements.

\subsection*{Conflict of interest statement}

On behalf of all authors, the corresponding author states that there is no
conflict of interest.

\subsection*{Data availability statement}

This document has no associated data.

\end{document}